\newtheorem{thm}{Theorem}[section]
\newtheorem{lem}[thm]{Lemma}
\newtheorem{exa}[thm]{Example}
\numberwithin{equation}{section}
\begin{document}

\begin{frontmatter}

\title{Eigenvalue embedding of undamped piezoelectric structure system with no-spillover \tnoteref{mytitlenote}}
\tnotetext[mytitlenote]{Supported by Research Foundation of Changsha University of Science and Technology (Grant No. 2019QJCZ051) and Department of Education of Hunan Province (Grant No. 19B028).}
\author[mymainaddress]{Kang Zhao\corref{mycorrespondingauthor}}
\cortext[mycorrespondingauthor]{Corresponding author}
\ead{zhaokangmath@126.com}



\address[mymainaddress]{School of Mathematics and Statistics, Changsha University of Science and Technology, Changsha, 410114, P. R. China}

\begin{abstract}
In this paper, the eigenvalue embedding problem of the undamped piezoelectric structure system with no-spillover (EEP-PS) is considered. It aims to update the original system to a new undamped piezoelectric structure system, such that some eigenpairs are replaced by newly given or measured ones, while the remaining eigenpairs are kept unchanged. A set of parametric solutions to the EEP-PS are provided and the performance of the proposed algorithms are illustrated by numerical examples.
\end{abstract}
\begin{keyword}
Piezoelectric structure \sep eigenvalue embedding \sep no-spillover
\MSC[2010] 65F15 \sep 65F35 \sep 15A18
\end{keyword}

\end{frontmatter}

\section{Introduction}
The technology of smart materials and structures especially piezoelectric smart structures has become mature over the last decade. One promising application of piezoelectric structures is the control and suppression of unwanted structural vibrations \cite{Sunar-Ps-1999}. Piezoelectric materials such as lead zirconate titanate are extensively used in vibration damping applications \cite{AAD-2001}. The three-dimensional piezoelectric constitutive law can be written as \cite{MV-book-smart,IT-book-1990}:

\begin{equation}\label{gg-m-1}
\left\{\begin{array}{l}
\{\sigma\}=[c]\{\epsilon\}-[e]^T\{E\},\\
\{D\}=[e]\{\epsilon\}+[\varepsilon]\{E\},\\
\end{array}\right.
\end{equation}
where mechanical variable $\sigma$ and $\epsilon$ denote the stress and the strain, respectively; electrical variable $D$ and $E$ denote the electric displacement and the electric field, respectively. Matrices $[c], [e]$ and $[\varepsilon]$ represent material properties: $[c]$ is the elasticity matrix, $[e]$ is the piezoelectric matrix, $[\varepsilon]$ is the dielectric matrix. The indirect piezoelectric effect is given by the first equation of (\ref{gg-m-1}), while the second equation characterizes the direct piezoelectric effect. Variational principles can be used to establish the finite element equations for piezoelectric structures. Modeling of piezoelectric smart structures by the finite element method and its implementation for active vibration control problems has been presented \cite{Meng-Ps-2006,Naraya-2003}. The global equation of motion governing a undamped structure system with $n$ degrees of freedom can be written as \cite{Tzou-Ps-1990}
\begin{equation}\label{gs-1}
M\begin{bmatrix}
\ddot{u}\\
\ddot{\phi}\\
\end{bmatrix}+K\begin{bmatrix}
u\\
\phi\\
\end{bmatrix}=\begin{bmatrix}
F_u\\
F_{\phi}\\
\end{bmatrix},
\end{equation}
where
\begin{equation}\label{gs-2}
M=\begin{bmatrix}
M_u & 0\\
0 & 0\\
\end{bmatrix},\ \ K=\begin{bmatrix}
K_u & K_{u\phi}\\
K_{u\phi}^T & K_{\phi}\\
\end{bmatrix}\in\mathbb{R}^{n\times n},
\end{equation}
with $M_u, K_u\in\mathbb{R}^{n_u\times n_u}$ and $K_{\phi}\in\mathbb{R}^{n_{\phi}\times n_{\phi}}$ being symmetric, $n=n_u+n_{\phi}$; $u$ denotes structural displacement, $\phi$ denotes electric potential; $M_u$ and $K_u$ are the structural mass and stiffness matrices, respectively, $K_{u\phi}$ is the piezoelectric coupling matrix and $K_{\phi}$ is the dielectric stiffness matrix; $F_u$ denotes the structural load and $F_{\phi}$ denotes the electric load. The  modality of undamped piezoelectric structure (\ref{gs-1}) is characterized by eigenvalues and eigenvectors of its associated generated eigenvalue problem
\begin{equation}\label{gs-3}
P(\lambda)x:=(\lambda M+K)x=0.
\end{equation}
We will refer to (\ref{gs-3}) with $M, K$ of the form (\ref{gs-2}) as an undamped piezoelectric structure system.

Eigenvalue embedding, also known as model updating in some literature, has recently been an active topic, for example, \cite{Qiang-2017,Chu-2007,Chu-2008,Chu-2009,Kang-2018-apm,Lancaster-2008,Friswell-1998}\cite{Carvalho-2006} and the references therein. The main purpose of eigenvalue embedding is to update the original system to a new ne, such that some "troublesome" or "unwanted" eigenvalues and the corresponding eigenvectors are replaces or some desired eigenvalues and eigenvectors are achieved. Among current developments for model updating, the problem of maintaining the remaining or unknown eigenvalues and eigenvectors unchanged is of practical importance, which is known as no-spillover phenomena in literature, see \cite{Carvalho-2006,Chu-2007}.

In this paper, we consider the eigenvalue embedding problem of the undamped piezoelectric smart system with no-spillover, which is to update the original system to a new undamped piezoelectric smart system, such that some nonzero eigenvalues are replaced with some desired ones, while the remaining eigenvalues and eigenvectors are kept unchanged. The eigenvectors corresponding to the updated eigenvalues of the new system can be different from those corresponding to the eigenvalues to be updated of the original system. But there may be some restrictions on them. For example, in the model updating problems for the second order system, it has been shown in \cite{Chu-2009,Chu-2007} that, under some mild conditions, the newly measured eigenvectors must lie in the same subspace as those spanned by eigenvectors corresponding to the eigenvalues to be updated of the original system. Hence, in this paper, we restrict the eigenvalue embedding problem in that the newly given or measured eigenvectors span the same subspaces as the original eigenvectors. The eigenvalue embedding problem for the undamped piezoelectric smart system with no-spillover (EEP-PS) can be stated as follows.

{\bf EEP-PS}: Given an undamped piezoelectric system $(\lambda M+K)x=0$ with $M, K$ of the form (\ref{gs-2}), some of its eigenpairs $\{(\lambda_i, x_i)\}_{i=1}^p$ $(p\ll n_u)$ with $\{\lambda_i\}_{i=1}^p$ being closed under complex conjugate, and a set of $p$ nonzero numbers $\{\tilde{\lambda}_i\}_{i=1}^p$, closed under complex conjugate, update the original system to a new undamped piezoelectric smart system $$(\lambda \widetilde{M}+\widetilde{K})x=0$$ with $\widetilde{M}=\begin{bmatrix}
\widetilde{M}_u & 0\\
0 & 0\\
\end{bmatrix}$ and $\widetilde{K}=\begin{bmatrix}
\widetilde{K}_u & \widetilde{K}_{u\phi}\\
\widetilde{K}_{u\phi}^T & \widetilde{K}_{\phi}\\
\end{bmatrix}$ of the form (\ref{gs-2}), such that the part of eigenvalues $\{\lambda_i\}_{i=1}^p$ of the original system are replaced by $\{\tilde{\lambda}_i\}_{i=1}^p$, the eigenvectors $\{\tilde{x}_i\}_{i=1}^p$ corresponding to $\{\lambda_i\}_{i=1}^p$ of the updated system span the same subspaces as $\{x_i\}_{i=1}^p$, and the remaining $n-p$ eigenvalues and eigenvectors are kept unchanged.

We will provide a set of solutions depending on some parameter matrices to the EEP-PS. Obviously, if $n_{\phi}\geq 1$, the coefficient matrix $M$ in (\ref{gs-2}) is singular, which implies that the matrix polynomial $P(\lambda)$ contain both finite and infinity eigenvalues. Thus, the existing results on the Eigenvalue embedding/model updating of quadratic systems cannot be directly applied to the undamped piezoelectric system. In this paper, we update $p$ eigenvalues of the finite part.

This paper is organized as follows. In section 2, we give a set of solutions to the EEP-PS depending on several parametric matrices, and propose an algorithm to compute the solution. Numerical examples are given in section 3 to illustrate the performance of the proposed algorithm. Some conclusions are finally given in section 4.
\section{Eigenvalue embedding with no-spillover}
Throughout this paper, we assume that $M_u$ and $K_{\phi}$ are nonsingular. Then, it follows from the following Lemma \ref{lem-1} that $P(\lambda)$ is a regular matrix polynomial, even though the coefficient matrix $M$ of (\ref{gs-3}) is singular.
\begin{lem}\label{lem-1}
If $K_{\phi}$ is nonsingular, then $\lambda M+K$ is regular, i.e., $\det(\lambda M+K)$ is not identically zero for all $\lambda\in\mathbb{C}$.
\end{lem}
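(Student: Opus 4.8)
The plan is to exploit the $2\times 2$ block structure of $M$ and $K$ together with the invertibility of $K_{\phi}$ to eliminate the electric-potential block by a Schur complement, thereby reducing $P(\lambda)$ to a generalized eigenvalue pencil in the mechanical variables whose leading coefficient is nonsingular.

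From (\ref{gs-2}) one has
\begin{equation*}
\lambda M+K=\begin{bmatrix}
\lambda M_u+K_u & K_{u\phi}\\
K_{u\phi}^T & K_{\phi}\\
\end{bmatrix},
\end{equation*}
and since $K_{\phi}$ is nonsingular the standard block-determinant (Schur complement) identity gives
\begin{equation*}
\det(\lambda M+K)=\det(K_{\phi})\,\det\!\big(\lambda M_u+K_u-K_{u\phi}K_{\phi}^{-1}K_{u\phi}^T\big).
\end{equation*}
Hence it suffices to prove that $q(\lambda):=\det\big(\lambda M_u+S\big)$, with $S:=K_u-K_{u\phi}K_{\phi}^{-1}K_{u\phi}^T\in\mathbb{R}^{n_u\times n_u}$, is not identically zero.

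For this I would argue by degree: $q$ is a polynomial in $\lambda$ of degree at most $n_u$, and multilinearity of the determinant shows that its coefficient of $\lambda^{n_u}$ is exactly $\det(M_u)$. Since $M_u$ is assumed nonsingular, this leading coefficient is nonzero, so $q$ has degree precisely $n_u$ and in particular $q\not\equiv 0$. Combining this with $\det(K_{\phi})\neq 0$ yields $\det(\lambda M+K)\not\equiv 0$, i.e.\ $\lambda M+K$ is regular.

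There is essentially no serious obstacle in the argument; the one point that needs care is that one must invoke the nonsingularity of $M_u$ (not merely of $K_{\phi}$) in order to conclude that the reduced pencil $\lambda M_u+S$ is regular. Equivalently, the Schur reduction shows that the finite eigenvalues of $P(\lambda)$ coincide with the eigenvalues of the $n_u\times n_u$ pencil $\lambda M_u+S$, which is the structural observation the remaining constructions of the paper rest on.
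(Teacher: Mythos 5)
Your proof is correct and follows essentially the same route as the paper: a Schur-complement block-determinant factorization $\det(\lambda M+K)=\det(K_{\phi})\det(\lambda M_u+K_u-K_{u\phi}K_{\phi}^{-1}K_{u\phi}^T)$, then using the nonsingularity of $M_u$ (a standing assumption of the paper) to conclude the reduced pencil's determinant is not identically zero. Your explicit leading-coefficient argument just fills in the step the paper asserts without detail.
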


\begin{proof}
It follows from $K_{\phi}$ being nonsingular that
$$\begin{array}{rl}
\det(\lambda M+K)= & \det\left(\lambda\begin{bmatrix}
M_u & 0\\
0 & 0\\
\end{bmatrix}+\begin{bmatrix}
K_u & K_{u\phi}\\
K_{u\phi}^T & K_{\phi}\\
\end{bmatrix}\right)\\
=& \det\left(\begin{array}{cc}
\lambda M_u+K_u & K_{u\phi}\\
K_{u\phi}^T & K_{\phi}\\
\end{array}\right)\\
=& \det\left(\begin{array}{cc}
\lambda M_u+K_u-K_{u\phi}K_{\phi}^{-1}K_{u\phi}^T & 0\\
K_{u\phi}^T & K_{\phi}\\
\end{array}\right)\\
=&\det(K_{\phi})\det(\lambda M_u+K_u-K_{u\phi}K_{\phi}^{-1}K_{u\phi}^T).\\
\end{array}$$
Since $M_u$ is nonsingular, $\det(\lambda M_u+K_u-K_{u\phi}K_{\phi}^{-1}K_{u\phi}^T)$ is not identically zero for all $\lambda\in\mathbb{C}$, which implies that $\lambda M+K$ is regular.
\end{proof}

Clearly, $P(\lambda)$ has both finite and infinity eigenvalues, since the lead coefficient matrix $M$ is singular. For simplicity, we assume that the finite eigenvalues of $P(\lambda)$ are all simple and nonzero.
Taking a Jordan pair $(X(\lambda_i), J(\lambda_i))$ for every eigenvalue $\lambda_j$ of $P(\lambda)$, we define \emph{finite Jordan pair} $(X_F, J_F)$ \cite{Lancaster-book} of $P(\lambda)$ as
$$X_F=[X(\lambda_1), X(\lambda_2), \ldots, X(\lambda_{q})],\ \ J_F=\mbox{diag}(\lambda_1, \lambda_2, \ldots, \lambda_{q}).$$
From Lemma \ref{lem-1}, we known that $\deg(\det P(\lambda))=n_u$, which implies that $q=u_u$ \cite{Lancaster-book} and the sizes of $X_F$ and $J_F$ are $n\times n_u$ and $n_u\times n_u$, respectively.

\begin{lem}\label{lem-2}\cite{Lancaster-book}
Let $(\hat{X}, \hat{J})$ be a pair of matrices, where $\hat{X}$ is an $n\times n_u$ matrix and $\hat{J}$ is an nonsingular $n_u\times n_u$ diagonal matrix. Then $(\hat{X}, \hat{J})$ be a Jordan pair of $P(\lambda)=\lambda M+K$ if and only if $\mbox{rank}(\hat{X})=n_u$ and
$$M\hat{X}\hat{J}+K\hat{X}=0.$$
\end{lem}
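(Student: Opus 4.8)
The plan is to prove the equivalence by pivoting on the Schur complement pencil $S(\lambda):=\lambda M_u+K_u-K_{u\phi}K_{\phi}^{-1}K_{u\phi}^T$ that already appeared in the proof of Lemma \ref{lem-1}. Since $M_u$ is nonsingular, $S(\lambda)$ is a regular $n_u\times n_u$ pencil with nonsingular leading coefficient, so all of its $n_u$ eigenvalues are finite; by the standing assumption they are simple and nonzero, and since $\det P(\lambda)=\det(K_{\phi})\det S(\lambda)$ they are exactly the finite eigenvalues of $P(\lambda)$. For a pencil with nonsingular leading coefficient a Jordan pair is just an eigendecomposition, so I would first record that $(\hat X_1,\hat J)$ is a finite Jordan pair of $S(\lambda)$ precisely when $\hat X_1\in\mathbb{C}^{n_u\times n_u}$ is nonsingular, $\hat J$ is diagonal, and $-M_u^{-1}\big(K_u-K_{u\phi}K_{\phi}^{-1}K_{u\phi}^T\big)\hat X_1=\hat X_1\hat J$, i.e. $M_u\hat X_1\hat J+\big(K_u-K_{u\phi}K_{\phi}^{-1}K_{u\phi}^T\big)\hat X_1=0$.

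Next I would set up the bridge between $P(\lambda)$ and $S(\lambda)$ by block elimination. Partition $\hat X=\begin{bmatrix}\hat X_1\\ \hat X_2\end{bmatrix}$ with $\hat X_1\in\mathbb{C}^{n_u\times n_u}$ and $\hat X_2\in\mathbb{C}^{n_{\phi}\times n_u}$, and expand $M\hat X\hat J+K\hat X=0$ using (\ref{gs-2}): the bottom block is $K_{u\phi}^T\hat X_1+K_{\phi}\hat X_2=0$, forcing $\hat X_2=-K_{\phi}^{-1}K_{u\phi}^T\hat X_1$, and substituting into the top block $M_u\hat X_1\hat J+K_u\hat X_1+K_{u\phi}\hat X_2=0$ produces exactly the reduced equation above. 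Conversely, given any $\hat X_1$ solving the reduced equation, setting $\hat X_2:=-K_{\phi}^{-1}K_{u\phi}^T\hat X_1$ recovers $M\hat X\hat J+K\hat X=0$. Since then $\hat X=\begin{bmatrix}I\\ -K_{\phi}^{-1}K_{u\phi}^T\end{bmatrix}\hat X_1$ with the first factor of full column rank $n_u$, I also get $\mathrm{rank}(\hat X)=\mathrm{rank}(\hat X_1)$, so the rank-$n_u$ condition on $\hat X$ matches nonsingularity of $\hat X_1$. (The same column-by-column reading also shows directly that $M\hat X\hat J+K\hat X=0$ says each column of $\hat X$ is an eigenvector of $P$ for the corresponding diagonal entry of $\hat J$, and full column rank plus simplicity of the eigenvalues forces these $n_u$ diagonal entries to be distinct.)

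Putting the pieces together, ``$\mathrm{rank}(\hat X)=n_u$ and $M\hat X\hat J+K\hat X=0$'' is equivalent to ``$\hat X_1$ nonsingular and $(\hat X_1,\hat J)$ a finite Jordan pair of $S(\lambda)$ with $\hat X$ the lift $\begin{bmatrix}I\\ -K_{\phi}^{-1}K_{u\phi}^T\end{bmatrix}\hat X_1$'', and this in turn must be identified with ``$(\hat X,\hat J)$ is a finite Jordan pair of $P(\lambda)$''. I expect that last identification to be the main obstacle: it requires going back to the definition of a Jordan pair of a matrix polynomial with singular leading coefficient (as in \cite{Lancaster-book}) and checking that the data $(\hat X,\hat J)$ with $n_u$ columns, full column rank, diagonal nonsingular $\hat J$, and $M\hat X\hat J+K\hat X=0$ captures the entire finite elementary-divisor structure of $P(\lambda)$ and none of the infinite part — which is precisely where the degree count $\deg\det P(\lambda)=n_u$ from Lemma \ref{lem-1} does the essential work, since it guarantees the $n_u$ distinct eigenvalues collected in $\hat J$ are all of the finite spectrum.
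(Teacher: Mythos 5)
You should first note that the paper itself gives no proof of Lemma \ref{lem-2}: it is quoted from \cite{Lancaster-book} as the degree-one instance of the standard characterization of a finite Jordan pair (full rank of $\hat X$ together with $M\hat X\hat J+K\hat X=0$, with the size of $\hat J$ equal to $\deg\det P$), so your argument can only be measured against that definition, not against an in-paper proof. Your computational core is correct: with $M,K$ as in (\ref{gs-2}), the bottom block of $M\hat X\hat J+K\hat X=0$ forces $\hat X_2=-K_{\phi}^{-1}K_{u\phi}^{T}\hat X_1$, the top block then becomes the Schur-complement equation $M_u\hat X_1\hat J+(K_u-K_{u\phi}K_{\phi}^{-1}K_{u\phi}^{T})\hat X_1=0$, and $\mbox{rank}(\hat X)=\mbox{rank}(\hat X_1)$ for the lifted form.

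The genuine gap is exactly the step you defer at the end: identifying ``$\mbox{rank}(\hat X)=n_u$ and $M\hat X\hat J+K\hat X=0$'' (equivalently, your lifted eigendecomposition of $S(\lambda)$) with the definition of a finite Jordan pair of $P(\lambda)$. That identification \emph{is} the lemma, and routing through $S(\lambda)$ does not make it easier: the claim ``finite Jordan pairs of $P$ are precisely the lifts of Jordan pairs of $S$'' is a statement of the same kind, and you do not establish it either. It can be closed in two ways. (i) Observe that your elimination is a \emph{constant} equivalence,
$$\begin{bmatrix} I & -K_{u\phi}K_{\phi}^{-1}\\ 0 & I\end{bmatrix}P(\lambda)\begin{bmatrix} I & 0\\ -K_{\phi}^{-1}K_{u\phi}^{T} & I\end{bmatrix}=\begin{bmatrix} S(\lambda) & 0\\ 0 & K_{\phi}\end{bmatrix},$$
with both outer factors invertible and independent of $\lambda$; hence multiplication by the right-hand constant factor maps Jordan chains of the block-diagonal polynomial at every finite $\lambda_0$ bijectively onto those of $P$, and the constant invertible block $K_{\phi}$ carries no finite spectrum, so the finite Jordan pairs of $P$ are exactly your lifts of Jordan pairs of $S$. (ii) More simply, under the paper's standing assumption that the finite eigenvalues are simple and nonzero, the parenthetical remark in your second paragraph already contains the whole proof and makes the Schur complement unnecessary: the equation says each column of $\hat X$ is an eigenvector of $P$ at the corresponding diagonal entry of $\hat J$; simplicity gives one-dimensional kernels, so full column rank forces the $n_u$ diagonal entries to be distinct, and $\deg\det P(\lambda)=n_u$ (from Lemma \ref{lem-1}) shows they exhaust the finite spectrum, which is precisely the definition of a finite Jordan pair here; conversely, a finite Jordan pair consists of one eigenvector for each of the $n_u$ distinct finite eigenvalues, satisfies the equation column by column, and has rank $n_u$ because eigenvectors of a regular pencil at distinct eigenvalues are linearly independent. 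Either completion needs to be written out; as submitted, the decisive equivalence is asserted rather than proved.
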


Let
\begin{equation}\label{gs-4}
\varphi_0^{(i)},\ldots,\varphi_{s_i-1}^{(i)},\ \ i=1,\ldots, r,
\end{equation}
be a canonical set of Jordan chains of the analytic (at infinity) matrix function $\lambda^{-1}P(\lambda)$ corresponding to $\lambda=\infty$, if this point is an eigenvalue of $\lambda^{-1}P(\lambda)$. By definition, a canonical set of Jordan chains of an analytic matrix-valued function $L(\lambda)$ at infinity is just a canonical set of Jordan chains of the matrix function $L(\lambda^{-1})$ at zero \cite{Lancaster-book}. Thus, the Jordan chain (\ref{gs-4}) form a canonical set of Jordan chains of the matrix polynomial $\tilde{P}(\lambda)=\lambda P(\lambda^{-1})$ corresponding to the eigenvalue zero. Since the matrix $M_u$ in (\ref{gs-2}) is nonsingular, the algebraic multiplicity of zero eigenvalue of $\tilde{P}(\lambda)$ is $n_{\phi}$. For simplicity, we assume that the algebraic multiplicity of every Jordan block with zero eigenvalue of $\tilde{P}(\lambda)$ is $1$. Then, we shall use the following notation
$$X_{\infty}=[\varphi_1, \ldots, \varphi_{n_{\phi}}], \ \ J_{\infty}=0_{n_{\phi}\times n_{\phi}}.$$
We call $(X_{\infty}, J_{\infty})$ an \emph{infinite Jordan pair} of $P(\lambda)$.

\begin{lem}\label{lem-3}\cite{Lancaster-book}
Let $(\hat{X}, \hat{J})$ be a pair of matrices, where $\hat{X}$ is an $n\times n_{\phi}$ matrix and $\hat{J}$ is an $n_{\phi}\times n_{\phi}$ zero matrix. Then $(\hat{X}, \hat{J})$ be an infinite Jordan pair of $P(\lambda)=\lambda M+K$ if and only if $\mbox{rank}(\hat{X})=n_{\phi}$ and
$$M\hat{X}=0.$$
\end{lem}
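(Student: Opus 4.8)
The plan is to reduce the statement to the definition of the infinite Jordan pair recalled above, which identifies $(\hat{X},\hat{J})$ as an infinite Jordan pair of $P(\lambda)=\lambda M+K$ exactly when it is a (finite) Jordan pair, at the eigenvalue $\lambda=0$, of the reversed pencil $\tilde{P}(\lambda)=\lambda P(\lambda^{-1})=\lambda K+M$. Thus Lemma~\ref{lem-3} is the ``eigenvalue-at-infinity'' counterpart of Lemma~\ref{lem-2}, obtained by applying the latter's reasoning to $\tilde{P}$ at $0$, with the hypothesis ``$\hat{J}$ nonsingular'' of Lemma~\ref{lem-2} replaced here by ``$\hat{J}=0$''. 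First I would record the facts already in hand: $\tilde{P}$ has degree one with $\tilde{P}(0)=M$; by (\ref{gs-2}) and the nonsingularity of $M_u$ we have $\ker\tilde{P}(0)=\ker M$ of dimension exactly $n_\phi$; and, as noted above, $0$ is an eigenvalue of $\tilde{P}$ of algebraic multiplicity $n_\phi$ with all Jordan blocks of size one, hence semisimple, so its geometric multiplicity is $n_\phi$ as well.

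For the forward implication I would take $(\hat{X},\hat{J})$ with $\hat{J}=0_{n_\phi\times n_\phi}$ to be an infinite Jordan pair of $P(\lambda)$, i.e.\ a Jordan pair of $\tilde{P}$ at $0$. The Jordan-chain relation for the degree-one pencil $\tilde{P}(\lambda)=\lambda K+M$ reads $K\hat{X}\hat{J}+M\hat{X}=0$, which with $\hat{J}=0$ collapses to $M\hat{X}=0$; and the completeness (canonicity) condition in the definition of a Jordan pair, that $\mathrm{rank}\bigl[\hat{X}^{T},(\hat{X}\hat{J})^{T},\dots\bigr]^{T}$ equal the order of $\hat{J}$, collapses — again because $\hat{X}\hat{J}^{k}=0$ for $k\ge 1$ — to $\mathrm{rank}(\hat{X})=n_\phi$.

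For the converse I would assume $\mathrm{rank}(\hat{X})=n_\phi$ and $M\hat{X}=0$; then the $n_\phi$ columns of $\hat{X}$ are linearly independent vectors of the $n_\phi$-dimensional space $\ker M=\ker\tilde{P}(0)$, hence a basis of the eigenspace of $\tilde{P}$ at $0$. Since $0$ is semisimple of multiplicity $n_\phi$ for $\tilde{P}$, any basis of that eigenspace is a canonical set of (length-one) Jordan chains, so $(\hat{X},0_{n_\phi\times n_\phi})$ is a Jordan pair of $\tilde{P}$ at $0$, that is, an infinite Jordan pair of $P(\lambda)$. The only genuinely delicate point is the bookkeeping with the definition of a Jordan pair at infinity — checking that its chain relation and its canonicity condition really do reduce to ``$M\hat{X}=0$'' and ``$\mathrm{rank}(\hat{X})=n_\phi$'' — together with confirming that the partial multiplicities of $P(\lambda)$ at infinity sum to exactly $n_\phi$, which is precisely where the nonsingularity of $M_u$ (and the nonvanishing of the finite eigenvalues) enters.
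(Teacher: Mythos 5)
The paper offers no proof of Lemma~\ref{lem-3}: it is quoted directly from \cite{Lancaster-book}, so there is no in-paper argument to compare yours against line by line. Judged on its own terms, your verification is correct and is the natural one. You correctly identify the reversed polynomial as $\tilde P(\lambda)=\lambda P(\lambda^{-1})=\lambda K+M$, so that an infinite Jordan pair of $P$ is by definition a Jordan pair of $\tilde P$ at $0$; with $\hat J=0_{n_\phi\times n_\phi}$ the chain relation $K\hat X\hat J+M\hat X=0$ collapses to $M\hat X=0$ and the canonicity (full-column-rank) condition collapses to $\mathrm{rank}(\hat X)=n_\phi$, which gives the forward direction. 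For the converse you correctly invoke the two standing assumptions of the section: $M_u$ nonsingular gives $\dim\ker M=\dim\ker\tilde P(0)=n_\phi$, and the paper's simplifying assumption that every Jordan block of $\tilde P$ at $0$ has size one makes the eigenvalue at infinity semisimple of multiplicity $n_\phi$, so any rank-$n_\phi$ matrix $\hat X$ with $M\hat X=0$ is a basis of the eigenspace and hence a canonical set of length-one Jordan chains; this assumption is genuinely needed, since otherwise the ``if'' direction would fail. One minor quibble: the fact that the partial multiplicities at infinity sum to $n_\phi$ needs only the nonsingularity of $M_u$ (together with $K_\phi$, which gives regularity and $\deg\det P=n_u$); the nonvanishing of the finite eigenvalues, which you mention parenthetically, is not used here and serves elsewhere in the paper (invertibility of $J_1$ and $\Lambda_1$).
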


By Lemma \ref{lem-2} and Lemma \ref{lem-3}, we have
\begin{lem}\label{lem-4}
Let $(\hat{X}, \hat{J})$ be a pair of matrices, where $\hat{X}$ is an $n\times n$ matrix and $\hat{J}=\mbox{diag}(\hat{J}_1, 0)$ with $\hat{J}_1$ being a nonsingular $n_u\times n_u$ diagonal matrix. Then $(\hat{X}, \hat{J})$ is a Jordan pair of $P(\lambda)=\lambda M+K$ if and only if $\mbox{rank}(\hat{X})=n$ and
$$M\hat{X}+K\hat{X}\begin{bmatrix}
\hat{J}_1^{-1} & 0\\
0 & 0\\
\end{bmatrix}=0.$$
And in this case, the matrix $\hat{X}$ must be of the following form
\begin{equation}\label{gg-1}
\hat{X}=\begin{bmatrix}
\hat{X}_{11} & 0\\
\hat{X}_{21} & \hat{X}_{22}\\
\end{bmatrix},
\end{equation}
where $\hat{X}_{11}\in\mathbb{R}^{n_u\times n_u}$, $\mbox{rank}(X_{11})=n_u$ and $\mbox{rank}(X_{22})=n-n_u$.
\end{lem}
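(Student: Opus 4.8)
The plan is to obtain Lemma~\ref{lem-4} by partitioning the pair $(\hat{X},\hat{J})$ according to the block structure of $\hat{J}$ and reducing to Lemma~\ref{lem-2} and Lemma~\ref{lem-3}. Write $\hat{X}=[\hat{X}_F\ \ \hat{X}_{\infty}]$ with $\hat{X}_F\in\mathbb{R}^{n\times n_u}$ the columns matched to $\hat{J}_1$ and $\hat{X}_{\infty}\in\mathbb{R}^{n\times n_{\phi}}$ the columns matched to the zero block. A one-line block computation shows
$$M\hat{X}+K\hat{X}\begin{bmatrix}\hat{J}_1^{-1}&0\\0&0\end{bmatrix}=\bigl[\,M\hat{X}_F+K\hat{X}_F\hat{J}_1^{-1}\ \ \big|\ \ M\hat{X}_{\infty}\,\bigr],$$
so, because $\hat{J}_1$ is invertible, the matrix equation in the lemma is equivalent to the two equations $M\hat{X}_F\hat{J}_1+K\hat{X}_F=0$ and $M\hat{X}_{\infty}=0$, which are precisely the equations in Lemma~\ref{lem-2} (for $(\hat{X}_F,\hat{J}_1)$) and Lemma~\ref{lem-3} (for $(\hat{X}_{\infty},0_{n_{\phi}})$).

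The second ingredient is the shape of $\hat{X}$. Since $M=\mathrm{diag}(M_u,0)$ with $M_u$ nonsingular, the relation $M\hat{X}_{\infty}=0$ is equivalent to the top $n_u$ rows of $\hat{X}_{\infty}$ vanishing, which is exactly the statement that $\hat{X}$ has the form (\ref{gg-1}). Once $\hat{X}$ is in this block-triangular shape with square diagonal blocks, $\det\hat{X}=\det\hat{X}_{11}\,\det\hat{X}_{22}$, so $\mathrm{rank}(\hat{X})=n$ is equivalent to $\mathrm{rank}(\hat{X}_{11})=n_u$ together with $\mathrm{rank}(\hat{X}_{22})=n-n_u$, and this also forces $\mathrm{rank}(\hat{X}_F)=n_u$ and $\mathrm{rank}(\hat{X}_{\infty})=n_{\phi}$.

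With these two observations both implications are short. If $(\hat{X},\hat{J})$ is a Jordan pair of $P(\lambda)$, then (by the spectral theory in \cite{Lancaster-book}, using $\deg\det P(\lambda)=n_u$ and $n=n_u+n_{\phi}$) its restrictions $(\hat{X}_F,\hat{J}_1)$ and $(\hat{X}_{\infty},0_{n_{\phi}})$ are a finite and an infinite Jordan pair and $\mathrm{rank}(\hat{X})=n$; Lemmas~\ref{lem-2} and \ref{lem-3} then yield the two equations, which reassemble into the matrix equation, and $M\hat{X}_{\infty}=0$ gives the form (\ref{gg-1}) with the stated block ranks. Conversely, if $\mathrm{rank}(\hat{X})=n$ and the matrix equation holds, split it into the two equations; $M\hat{X}_{\infty}=0$ puts $\hat{X}$ in the form (\ref{gg-1}), invertibility of the block-triangular $\hat{X}$ yields the full-rank statements for $\hat{X}_F$ and $\hat{X}_{\infty}$, and Lemmas~\ref{lem-2} and \ref{lem-3} show that $(\hat{X}_F,\hat{J}_1)$ and $(\hat{X}_{\infty},0_{n_{\phi}})$ are a finite and an infinite Jordan pair; since these exhaust all $n_u$ finite and all $n_{\phi}$ infinite eigenvalues and $\hat{X}$ is invertible, $(\hat{X},\hat{J})$ is a Jordan pair of $P(\lambda)$.

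The main obstacle I anticipate is conceptual bookkeeping rather than computation: one must handle the definition of a Jordan pair for the regular but non-monic pencil $\lambda M+K$ and justify, via \cite{Lancaster-book}, that a full Jordan pair of $P(\lambda)$ is exactly the juxtaposition of a finite Jordan pair and an infinite Jordan pair whose combined eigenvector matrix is invertible, and that the column counts $n_u$ and $n_{\phi}$ are the correct and forced ones. After that, every remaining step is elementary block linear algebra.
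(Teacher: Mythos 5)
Your proposal is correct and follows essentially the same route as the paper: the equivalence is reduced, via the column partition of $\hat{X}$ matched to the blocks of $\hat{J}$, to Lemma~\ref{lem-2} and Lemma~\ref{lem-3} (the paper states this part as immediate), and the form (\ref{gg-1}) with the rank conditions is obtained exactly as in the paper from $M\hat{X}_{\infty}=0$ together with the nonsingularity of $M_u$ and $\mbox{rank}(\hat{X})=n$. Your write-up merely makes explicit the block computation and the assembly of the finite and infinite Jordan pairs that the paper leaves implicit.
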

\begin{proof}
The first part of this Lemma can be directly obtained by Lemma \ref{lem-2} and Lemma \ref{lem-3}. So we just prove the second part, i.e., $\hat{X}$ has the form (\ref{gg-1}). Partition $\hat{X}$ into
$$\hat{X}=[\hat{X}_1\ \ \hat{X}_2]=\begin{bmatrix}
\hat{X}_{11} & \hat{X}_{12}\\
\hat{X}_{21} & \hat{X}_{22}\\
\end{bmatrix},$$
where $\hat{X}_1\in\mathbb{R}^{n\times n_u}$, $\hat{X}_{11}\in\mathbb{R}^{n_u\times n_u}$. Since $(\hat{X}, \hat{J})$ is Jordan pair of $P(\lambda)$, we can see from the block structure of $\hat{J}$ that $(\hat{X}_2, 0_{n_{\phi}\times n_{\phi}})$ is an infinite Jordan pair of $P(\lambda)$. From Lemma \ref{lem-3}, it follows that
$$M\hat{X}_2=0,$$
which implies that $\hat{X}_{12}=0$ since $M_u$ is nonsingular. Therefore, $\hat{X}$ has the form  (\ref{gg-1}) and then $\hat{X}_{11}$ is nonsingular since $\mbox{rank}(\hat{X})=n$.
\end{proof}

For simplicity, we assume that $\{\tilde{\lambda}\}_{i=1}^p$ are simple and nonzero eigenvalues of the updated system. Without loss of generality, we assume that the $p$ eigenpairs $\{(\lambda_i, x_i)\}_{i=1}^p$ are ordered such that for $j=1,\ldots, s,$
$$\begin{array}{ll}
\lambda_{2j-1}=\overline{\lambda}_{2j}=\alpha_j+i\beta_j, & \alpha_j\in\mathbb{R}, \beta_j>0,\\
x_{2j-1}=\overline{x}_{2j}=x_{jR}+ix_{jI}, & x_{jR}, x_{jI}\in\mathbb{R}^{n},\\
\end{array}$$
and for $j=2s+1,\ldots, p$, $\lambda_j\in\mathbb{R}, x_j\in\mathbb{R}^n$. Define
\begin{equation}\label{gs-5}
\Lambda_1=\mbox{diag}\left(\begin{bmatrix}
\alpha_1 & \beta_1\\
-\beta_1 & \alpha_1\\
\end{bmatrix},\ldots, \begin{bmatrix}
\alpha_s & \beta_s\\
-\beta_s& \alpha_s\\
\end{bmatrix}, \lambda_{2s+1},\ldots, \lambda_p\right),
\end{equation}
\begin{equation}\label{gs-6}
X_1=[x_{1R}, x_{1I},\ldots, x_{sR}, x_{sI}, x_{2s+1},\ldots, x_p],
\end{equation}
which is referred to as the real representations of $\{(\lambda_i, x_i)\}_{i=1}^p$ \cite{Kang-2018-apm,Kang-amc-2014,Kang-2015-COAM}. Similarly, assume that the real representation of $\{\tilde{\lambda}_i\}_{i=1}^p$ is
\begin{equation}\label{gs-7}
\tilde{\Lambda}_1=\mbox{diag}\left(\begin{bmatrix}
\tilde{\alpha}_1 & \tilde{\beta}_1\\
-\tilde{\beta}_1 & \tilde{\alpha}_1\\
\end{bmatrix},\ldots, \begin{bmatrix}
\tilde{\alpha}_{\tilde{s}} & \tilde{\beta}_{\tilde{s}}\\
-\tilde{\beta}_{\tilde{s}}& \tilde{\alpha}_{\tilde{s}}\\
\end{bmatrix}, \tilde{\lambda}_{2{\tilde{s}}+1},\ldots, \tilde{\lambda}_p\right),
\end{equation}
and those of remaining $n-p$ eigenpairs $\{\lambda_i, x_i\}_{i=p+1}^n$ are $\Lambda_2$ and $X_2$, where
$\Lambda_2=\mbox{diag}(\Lambda_3, 0)\in\mathbb{R}^{(n-p)\times (n-p)},$
\begin{equation}\label{gs-lambda3}
\Lambda_3=\mbox{diag}\left(\begin{bmatrix}
\alpha'_1 & \beta'_1\\
-\beta'_1 & \alpha'_1\\
\end{bmatrix},\ldots, \begin{bmatrix}
\alpha'_t & \beta'_t\\
-\beta'_t& \alpha'_t\\
\end{bmatrix}, \lambda_{2t+1},\ldots, \lambda_{n_u-p}\right).
\end{equation}
We further assume that the sets of eigenvalues in $\Lambda_1$ and $\Lambda_2$ are disjoint, i.e.,
\begin{equation}\label{g-22}
\lambda(\Lambda_1)\cap\lambda(\Lambda_2)=\emptyset,
\end{equation}
where $\lambda(\cdot)$ is the set of all eigenvalues of a matrix.

With notations above, the MUP-PS can be mathematically reformulated as: given $M_u\in\mathbb{R}^{n_u\times n_u}$ and $K\in\mathbb{R}^{n\times n}$ being symmetric, $\Lambda_1, \widetilde{\Lambda}_1\in\mathbb{R}^{p\times p}$, $X_1\in\mathbb{R}^{n\times p}$  which satisfy
\begin{equation}\label{g-16}
\begin{bmatrix}
M_u & 0\\
0 & 0\\
\end{bmatrix}[X_1\ \ X_2]+K[X_1\ \ X_2]\begin{bmatrix}
\Lambda_1^{-1} & 0 & 0\\
0 & \Lambda_3^{-1} & 0\\
0 & 0 & 0\\
\end{bmatrix}=0,
\end{equation}
find $\widetilde{M}_u\in\mathbb{R}^{n_u\times n_u}$ and $\widetilde{K}\in\mathbb{R}^{n\times n}$ being symmetric such that
\begin{equation}\label{g-17}
\begin{bmatrix}
\widetilde{M}_u & 0\\
0 & 0\\
\end{bmatrix}[\widetilde{X}_1\ \ X_2]+\widetilde{K}[\widetilde{X}_1\ \ X_2]\begin{bmatrix}
\widetilde{\Lambda}_1^{-1} & 0 & 0\\
0 & \Lambda_3^{-1} & 0\\
0 & 0 & 0\\
\end{bmatrix}=0,
\end{equation}
holds for some $\widetilde{X}_1=X_1\Theta$, where $\Theta\in\mathbb{R}^{p\times p}$ is nonsingular. Note that $\Lambda_2\in\mathbb{R}^{(n-p)\times (n-p)}$ and $X_2\in\mathbb{R}^{n\times (n-p)}$ are remaining  eigenvalues and eigenvectors of the original undamped piezoelectric  system to be kept unchanged after the updating and they are generally unknown in applications. So it is necessary to  characterize the solution to the EEP-PS without the information of $\Lambda_2$ and $X_2$.

To give the parametric solutions to the EEP-PS, we will need a spectral decomposition of the undamped piezoelectric smart system of (\ref{gs-3}) which is motivated by \cite{Qiang-2017,Chu-2009-spectral}. We give the sufficient and necessary conditions for the matrices $M$ and $K$ in (\ref{gs-2}) such that  (\ref{gs-3}) is satisfied and express $M$ and $K$ in terms of eigenvalues and eigenvectors of (\ref{gs-3}). The following theorem then gives the spectral decomposition of the undamped piezoelectric  system (\ref{gs-3}), characterizing the structure of the Jordan pair $(X, J)$ and the relationship between the coefficient matrices $M$, $K$ in (\ref{gs-2}) and $(X, J)$.

\begin{thm}\label{thm-1}
Given nonsingular matrices  $X\in\mathbb{R}^{n\times n}$ and $J_1\in\mathbb{R}^{n_u\times n_u}$. Let $J=\mbox{diag}(J_1, 0)\in\mathbb{R}^{n\times n}$ and  partition $X$ as
\begin{equation}\label{g-1}
X=\begin{bmatrix}
X_u\\
X_{\phi}\\
\end{bmatrix},\ \ X_u\in \mathbb{R}^{n_u\times n},\ X_{\phi}\in\mathbb{R}^{n_{\phi}\times n}.
\end{equation}
Then there exist nonsingular symmetry matrices $M_u$ and $K$ such that
 \begin{equation}\label{g-2}
MX+KXJ=0
\end{equation}
where $M=\mbox{diag}(M_u, 0)$, if and only if there exist  symmetric matrix $\Gamma=\mbox{diag}(\Gamma_{11}, 0)\in\mathbb{R}^{n\times n}$ and nonsingular symmetric matrix $\Phi\in\mathbb{R}^{n_{\phi}\times n_{\phi}}$ , where $\Gamma_{11}\in\mathbb{R}^{n_u\times n_u}$ is nonsingular, such that
\begin{subequations}\label{g-3}
\begin{equation}\label{g-3-4}
T^{-1}:=\Gamma+X_{\phi}^T\Phi X_{\phi}
\end{equation} is nonsingular and
\begin{equation}\label{g-3-1}
J_1^T\Gamma_{11}=\Gamma_{11}J_1,
\end{equation}
\begin{equation}\label{g-3-2}
X_uTX_{\phi}^T=0,
\end{equation}
\begin{equation}\label{g-3-3}
X_{\phi}TX_{\phi}^T=\Phi^{-1}.
\end{equation}
\end{subequations}
If such matrices $\Gamma$ and $\Phi$ exist, then the coefficient matrices $M_u$ and $K$ can be expressed as
\begin{subequations}\label{g-4}
\begin{equation}\label{g-4-1}
M_u=(X_uTX_u^T)^{-1},
\end{equation}
\begin{equation}\label{g-4-2}
K=X^{-T}\begin{bmatrix}
-\Gamma_{11}J_1^{-1} & 0\\
0 & K_{22}'\\
\end{bmatrix}X^{-1},
\end{equation}
where $K'_{22}\in\mathbb{R}^{n_{\phi}\times n_{\phi}}$ is an arbitrary nonsingular symmetric matrix.
\end{subequations}

\end{thm}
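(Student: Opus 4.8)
The plan is to work with the congruence transforms $N:=X^{T}MX$ and $L:=X^{T}KX$, which are symmetric exactly when $M$ and $K$ are, so that (\ref{g-2}) becomes the compact relation $N+LJ=0$, and then to read off structure from the block pattern $J=\mbox{diag}(J_1,0)$. With $X$ partitioned as in (\ref{g-1}), the fact I will use repeatedly is that $M=\mbox{diag}(M_u,0)$ gives $N=X_u^{T}M_uX_u$, and hence $\Gamma+X_{\phi}^{T}\Phi X_{\phi}=X^{T}\mbox{diag}(M_u,\Phi)X$ whenever $\Gamma=N$; together with the trivial observation that $T^{-1}=X^{T}DX$ forces $XTX^{T}=D^{-1}$, this converts the conditions (\ref{g-3-2})--(\ref{g-3-3}) into a statement about block-diagonality of $XTX^{T}$.

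For necessity I would start from (\ref{g-2}), multiply on the left by $X^{T}$ to get $N=-LJ$, and use symmetry of $N$ to conclude $LJ=J^{T}L$. Partitioning $L$ conformally with $J=\mbox{diag}(J_1,0)$ and using nonsingularity of $J_1$ forces $L=\mbox{diag}(L_{11},L_{22})$ with $J_1^{T}L_{11}=L_{11}J_1$, so $\Gamma:=N=-LJ=\mbox{diag}(-L_{11}J_1,\,0)$ already has the required form; then $\Gamma_{11}=-L_{11}J_1$ is symmetric and nonsingular and satisfies (\ref{g-3-1}), while $L_{11}=-\Gamma_{11}J_1^{-1}$ and $K_{22}':=L_{22}$ produce (\ref{g-4-2}). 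To supply $\Phi$, I would just take any nonsingular symmetric $\Phi\in\mathbb{R}^{n_{\phi}\times n_{\phi}}$ (e.g.\ the identity): then $T^{-1}=\Gamma+X_{\phi}^{T}\Phi X_{\phi}=X^{T}\mbox{diag}(M_u,\Phi)X$ is nonsingular, and inverting gives $XTX^{T}=\mbox{diag}(M_u^{-1},\Phi^{-1})$, whose blocks are exactly (\ref{g-3-2}), (\ref{g-3-3}) and $M_u=(X_uTX_u^{T})^{-1}$, i.e.\ (\ref{g-4-1}).

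For sufficiency I would go in reverse. Given $\Gamma=\mbox{diag}(\Gamma_{11},0)$, $\Phi$, and $T$ as in the statement, symmetry of $T$ together with (\ref{g-3-2})--(\ref{g-3-3}) yields $XTX^{T}=\mbox{diag}(X_uTX_u^{T},\,\Phi^{-1})$; as $XTX^{T}$ is nonsingular, so is $X_uTX_u^{T}$, and I would set $M_u:=(X_uTX_u^{T})^{-1}$ (symmetric, nonsingular), $M:=\mbox{diag}(M_u,0)$, and $K:=X^{-T}\mbox{diag}(-\Gamma_{11}J_1^{-1},\,K_{22}')X^{-1}$ with $K_{22}'$ an arbitrary nonsingular symmetric matrix; $K$ is symmetric by (\ref{g-3-1}) and nonsingular because $\Gamma_{11}$, $J_1$, $K_{22}'$ are. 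Inverting the block identity $XTX^{T}=\mbox{diag}(M_u^{-1},\Phi^{-1})$ gives $T^{-1}=X^{T}\mbox{diag}(M_u,\Phi)X=X_u^{T}M_uX_u+X_{\phi}^{T}\Phi X_{\phi}$, and comparing with $T^{-1}=\Gamma+X_{\phi}^{T}\Phi X_{\phi}$ forces $X^{T}MX=\Gamma$; since $X^{T}KXJ=\mbox{diag}(-\Gamma_{11}J_1^{-1},K_{22}')\mbox{diag}(J_1,0)=-\Gamma$, this gives $X^{T}(MX+KXJ)=0$, hence (\ref{g-2}) by nonsingularity of $X$.

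The step that carries all the weight is the bridge $XTX^{T}=\mbox{diag}(M_u^{-1},\Phi^{-1})$, equivalently $T^{-1}=X^{T}\mbox{diag}(M_u,\Phi)X$: in the necessity direction it falls out of $\Gamma=X^{T}MX$, while in the sufficiency direction it is exactly what (\ref{g-3-2})--(\ref{g-3-3}) are engineered to encode, and it is this identity that pins down $X^{T}MX=\Gamma$ and therefore the equation (\ref{g-2}). Everything else is $2\times2$ block-matrix bookkeeping plus keeping track of the nonsingularity of $X$, $J_1$, $\Gamma_{11}$, $\Phi$, and $K_{22}'$; I do not anticipate a genuine obstacle beyond keeping the partitions consistent.
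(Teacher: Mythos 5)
Your proposal is correct and follows essentially the same route as the paper: pre-multiplying by $X^{T}$, reading off the block structure of $\Gamma=X^{T}MX$ and $X^{T}KX$ from $J=\mbox{diag}(J_1,0)$, and using the key congruence identity $XTX^{T}=\mbox{diag}(M_u^{-1},\Phi^{-1})$ (the paper's (\ref{g-8}) and (\ref{g-15})) to encode (\ref{g-3-2})--(\ref{g-3-3}) and recover (\ref{g-4-1})--(\ref{g-4-2}). The only cosmetic difference is that you obtain the block-diagonality of $X^{T}KX$ from the commutation relation $LJ=J^{T}L$ before defining $\Gamma$, whereas the paper extracts the same block equations directly from $\Gamma+X^{T}KXJ=0$.
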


\begin{proof}
(Necessity)
Pre-multiplying $X^T$ on  (\ref{g-2}) we get
\begin{equation}\label{g-6}
X^TMX+X^TKXJ=0,
\end{equation}
Substituting $M$ in (\ref{gs-2}) and $X$ in (\ref{g-1}) into (\ref{g-6}) gives
\begin{equation}\label{g-7}
X_u^TM_uX_u+X^TKXJ=0,
\end{equation}
We define $\Gamma=X_u^TM_uX_u$.
Let $\Phi\in\mathbb{R}^{n_{\phi}\times n_{\phi}}$ be a  nonsingular and symmetric matrix. Since $X$ and $M_u$ are nonsingular, it is easy to see that
\begin{equation}\label{g-8}
\Gamma+X_{\phi}^T\Phi X_{\phi}=[X_u^T\ \ X_{\phi}^T]\begin{bmatrix}
M_u & 0\\
0 & \Phi\\
\end{bmatrix}\begin{bmatrix}
X_u\\
X_{\phi}\\
\end{bmatrix}
\end{equation}
is also nonsingular. Let $T:=(\Gamma+X_{\phi}^T\Phi X_{\phi})^{-1}$. From (\ref{g-8}) it follows that
$$
\begin{bmatrix}
M_u & 0\\
0 & \Phi\\
\end{bmatrix}=\left(\begin{bmatrix}
X_u\\
X_{\phi}\\
\end{bmatrix}T[X_u^T\ \ X_{\phi}^T]\right)^{-1}=\left(\begin{bmatrix}
X_uTX_u^T & X_uTX_{\phi}^T\\
X_{\phi}TX_u^T & X_{\phi}TX_{\phi}^T\\
\end{bmatrix}\right)^{-1},$$
which implies that $X_uTX_{\phi}^T=0$, $X_{\phi}TX_{\phi}^T=\Phi^{-1}$ and $M_u=(X_uTX_u^T)^{-1}$, which are exactly the formulas as in (\ref{g-3-2}), (\ref{g-3-3}) and (\ref{g-4-1}), respectively.

Substituting $J=\mbox{diag}(J_1, 0)$ into (\ref{g-7}) , we have
\begin{equation}\label{g-5}
\Gamma+X^TKX\begin{bmatrix}
J_1 & 0\\
0 & 0\\
\end{bmatrix}=0.
\end{equation}
Let $$\Gamma=\begin{bmatrix}
\Gamma_{11} & \Gamma_{12}\\
\Gamma_{12}^T & \Gamma_{22}\\
\end{bmatrix}, \ \ X^TKX=\begin{bmatrix}
K_{11} & K_{12}\\
K_{12}^T & K_{22}\\
\end{bmatrix},$$
where $\Gamma_{11}, K_{11}\in\mathbb{R}^{n_u\times n_u}$. Clearly, (\ref{g-5}) holds if and only if
\begin{equation}\label{g-11}
\Gamma_{12}=0, \ \ \Gamma_{22}=0,
\end{equation}
and in this case, we have
\begin{equation}\label{g-12}
K_{11}=-\Gamma_{11}J_1^{-1},\ \ K_{12}=0.
\end{equation}
Since $K_{11}$ is symmetric, we must have
\begin{equation}\label{g-13}
J_1^T\Gamma_{11}=\Gamma_{11}J_1.
\end{equation}
Therefore, we have
\begin{equation}\label{g-14}
K=X^{-T}\begin{bmatrix}
-\Gamma_{11}J_1^{-1} & 0\\
0 & K_{22}'\\
\end{bmatrix}X^{-1},
\end{equation}
where $K_{22}\in\mathbb{R}^{n_{\phi}\times n_{\phi}}$ is an arbitrary nonsingular and symmetric matrix.
From (\ref{g-13}), we can see that $K$ is nonsingular and $K^T=K$.

(Sufficiency) Conversely, if there exist two symmetric matrices $\Gamma$ and $\Phi$ satisfying (\ref{g-3}), where $\Phi$ is nonsingular, then
\begin{equation}\label{g-15}
XTX^T=\begin{bmatrix}
X_u\\
X_{\phi}\\
\end{bmatrix}T[X_u^T\ \ X_{\phi}^T]=\begin{bmatrix}
X_uTX_u^T & 0\\
0 & \Phi^{-1}\\
\end{bmatrix},
\end{equation}
is also nonsingular, which means that $X_uTX_u^T$ is nonsingular and symmetric. Thus $M_u$ is well defined by (\ref{g-4-1}).
From (\ref{g-15}), it follows that
$$X^{-T}T^{-1}X^{-1}=\begin{bmatrix}
(X_uTX_u^T)^{-1} & 0\\
0 & \Phi\\
\end{bmatrix},$$
which implies that
\begin{equation}\label{g-9}
T^{-1}=X_u^T(X_uTX_u^T)^{-1}X_u+X_{\phi}^T\Phi X_{\phi}.
\end{equation}
By the definitions of $M_u$ and $T$, we can see from (\ref{g-9}) that $\Gamma=X_u^TM_uX_u$. Noting the symmetry of $K_{22}'$ and (\ref{g-3-1}), it is easy to see that $K$ defined by (\ref{g-4-2}) is symmetric. By (\ref{g-3-1}) and (\ref{g-4-2}), we have
$$\begin{array}{rl}
X^TMX+X^TKXJ& =X_u^TM_uX_u +X^TKX J\\
& =\Gamma+X^TKXJ\\
& =\begin{bmatrix}
\Gamma_{11} & 0\\
0 & 0\\
\end{bmatrix}+X^TKX\begin{bmatrix}
J_1 & 0\\
0 & 0\\
\end{bmatrix}\\
& =0,\\
\end{array}$$
i.e., $MX+KXJ=0$ since $X$ is nonsingular. This completes the proof of the theorem.
\end{proof}

Now, with Theorem \ref{thm-1}, we are ready to characterize the solutions to the EEP-PS. Let $X_1$ in (\ref{gs-6}) and $\Lambda_1$ in (\ref{gs-5}) be the eigendata which will be replaced by $\widetilde{X}_1\in\mathbb{R}^{n\times p}$ and $\widetilde{\Lambda}_1$ in (\ref{gs-7}, and those of remaining  $n-p$ eigendata be given by $X_2\in\mathbb{R}^{n\times (n-p)}$ and $\Lambda_2$. From Lemma \ref{lem-4} and Theorem \ref{thm-1}, without loss of generality, we may always assume that $[X_1, X_2]$ and $[\widetilde{X}_1, X_2]$ are nonsingular. Partition $X_1$ as
\begin{equation}\label{gs-x}
X_1=\begin{bmatrix}
X_{1u}\\
X_{1\phi}\\
\end{bmatrix}
 \end{equation} where $X_{1u}\in\mathbb{R}^{n_u\times p}$. Clearly, $\left([X_1, X_2], \mbox{diag}(\Lambda_1, \Lambda_2)\right)$ is a Jordan pair of $P(\lambda)$.The following theorem is our main result which gives the parametric solutions to the EEP-PS.

\begin{thm}\label{thm-2}
For any nonsingular matrices $\Theta\in\mathbb{R}^{p\times p}$ and $\widetilde{\Gamma}_1\in\mathbb{R}^{p\times p}$ with
\begin{equation}\label{gs-11}
\widetilde{\Gamma}_1=\mbox{diag}\left(\begin{bmatrix}
a_1 & b_1\\
b_1 & -a_1\\
\end{bmatrix},\ldots, \begin{bmatrix}
a_{\tilde{s}} & b_{\tilde{s}}\\
b_{\tilde{s}}& -a_{\tilde{s}}\\
\end{bmatrix}, c_{2{\tilde{s}}+1},\ldots, c_p\right),
\end{equation}
the matrices
\begin{equation}\label{gs-13}
\widetilde{M}_u=\left(M_u^{-1}-X_{1u}\Gamma^{-1}_1X_{1u}^T+X_{1u}\Theta\widetilde{\Gamma}^{-1}_1\Theta^TX_{1u}^T\right)^{-1},
\end{equation}
\begin{equation}\label{gs-10}
\widetilde{K}=\left(K^{-1}+X_1\Lambda_1\Gamma_1^{-1}X_1^T-X_1\Theta\widetilde{\Lambda}_1\widetilde{\Gamma}_1^{-1}\Theta^TX_1^T\right)^{-1},
\end{equation}
where
\begin{equation}\label{gs-gamma}
\Gamma_1=X_{1u}^TM_uX_{1u},
\end{equation}
form a solution to the EEP-PS, provided that $\widetilde{M}_u$ and $\widetilde{K}$ in (\ref{gs-13}), (\ref{gs-10}), respectively, are well defined.
\end{thm}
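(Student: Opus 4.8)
The plan is to read Theorem~\ref{thm-1} as a spectral parametrisation of admissible undamped piezoelectric systems and to apply it twice. First I would apply it to the given system with $X=[X_1,X_2]$ (nonsingular, as noted before the statement) and $J=\mathrm{diag}(\Lambda_1,\Lambda_2)$, obtaining $\Gamma=\mathrm{diag}(\Gamma_{11},0)$, a nonsingular symmetric $\Phi$ and a free symmetric block $K'_{22}$ satisfying (\ref{g-3-1})--(\ref{g-4-2}). Since all the eigenvalues in play are nonzero and $\lambda(\Lambda_1)$ is disjoint from $\lambda(\Lambda_2)\supseteq\lambda(\Lambda_3)$ by (\ref{g-22}) (disjointness of $\lambda(\widetilde\Lambda_1)$ from $\lambda(\Lambda_2)$ is likewise in force, since $\{\widetilde\lambda_i\}$ are required to be simple eigenvalues of the updated system), the commutation relation (\ref{g-3-1}) is a homogeneous Sylvester equation for the off-diagonal block of $\Gamma_{11}$; hence that block vanishes and $\Gamma_{11}=\mathrm{diag}(\Gamma_1,\Gamma'_{22})$ with $\Gamma_1=X_{1u}^{T}M_uX_{1u}$ as in (\ref{gs-gamma}) and $\Gamma'_{22}$ the block of $M_u$ carried by the kept finite eigenvectors. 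Reading (\ref{g-4-1}) and (\ref{g-4-2}) blockwise, I would then record the key structural fact: the matrices $R_M:=M_u^{-1}-X_{1u}\Gamma_1^{-1}X_{1u}^{T}$ and $R_K:=K^{-1}+X_1\Lambda_1\Gamma_1^{-1}X_1^{T}$ are determined \emph{only} by $X_2,\Lambda_2$ and $K'_{22}$ --- which is exactly why the solution can be described without any knowledge of $X_2$ and $\Lambda_2$.

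Substituting $M_u^{-1}=R_M+X_{1u}\Gamma_1^{-1}X_{1u}^{T}$ and $K^{-1}=R_K-X_1\Lambda_1\Gamma_1^{-1}X_1^{T}$ into (\ref{gs-13})--(\ref{gs-10}) and using $\widetilde X_1=X_1\Theta$, $\widetilde X_{1u}=X_{1u}\Theta$, the candidate matrices become $\widetilde M_u^{-1}=R_M+\widetilde X_{1u}\widetilde\Gamma_1^{-1}\widetilde X_{1u}^{T}$ and $\widetilde K^{-1}=R_K-\widetilde X_1\widetilde\Lambda_1\widetilde\Gamma_1^{-1}\widetilde X_1^{T}$: they have precisely the same form as $M_u^{-1},K^{-1}$, with $(X_1,\Lambda_1,\Gamma_1)$ replaced by $(\widetilde X_1,\widetilde\Lambda_1,\widetilde\Gamma_1)$ and with the \emph{same} remainders $R_M,R_K$. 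I would dispose here of the structural requirements: $\widetilde M=\mathrm{diag}(\widetilde M_u,0)$ by construction; $\widetilde M_u$ is symmetric because $\widetilde\Gamma_1$ is, by (\ref{gs-11}); and $\widetilde K$ is symmetric because $\widetilde\Lambda_1\widetilde\Gamma_1^{-1}$ is, i.e.\ $\widetilde\Lambda_1^{T}\widetilde\Gamma_1=\widetilde\Gamma_1\widetilde\Lambda_1$, a one-line $2\times 2$-block verification against (\ref{gs-7}) and (\ref{gs-11}).

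Now I would apply Theorem~\ref{thm-1} a second time, in the sufficiency direction, to $\widetilde X=[\widetilde X_1,X_2]$ and the Jordan matrix $\mathrm{diag}(\widetilde\Lambda_1,\Lambda_2)$, with the trial data $\widetilde\Phi:=\Phi$ and $\widetilde\Gamma:=\widetilde X_u^{T}\widetilde M_u\widetilde X_u$ (subscripts $u,\phi$ denoting row blocks as in (\ref{g-1})). The infinite eigenvectors are kept, so their top $n_u$-block is still zero (Lemma~\ref{lem-4}): hence $\widetilde\Gamma=\mathrm{diag}(\widetilde\Gamma_{11},0)$ with $\widetilde\Gamma_{11}$ nonsingular (congruent to $\widetilde M_u$), and the rank-$p$ form of $\widetilde M_u^{-1}$, whose remainder $R_M$ is carried by the kept finite eigenvectors, forces $\widetilde\Gamma_{11}=\mathrm{diag}(\widetilde\Gamma_1,\Gamma'_{22})$ --- the \emph{same} trailing block $\Gamma'_{22}$ as before. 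Therefore (\ref{g-3-1}) holds (leading block $\widetilde\Lambda_1^{T}\widetilde\Gamma_1=\widetilde\Gamma_1\widetilde\Lambda_1$, trailing block the relation already valid for the original system); and with $\widetilde T^{-1}:=\widetilde\Gamma+\widetilde X_\phi^{T}\Phi\widetilde X_\phi=\widetilde X^{T}\mathrm{diag}(\widetilde M_u,\Phi)\widetilde X$, which is nonsingular, one gets $\widetilde X\widetilde T\widetilde X^{T}=\mathrm{diag}(\widetilde M_u^{-1},\Phi^{-1})$, yielding (\ref{g-3-2})--(\ref{g-3-4}) and $\widetilde M_u=(\widetilde X_u\widetilde T\widetilde X_u^{T})^{-1}$ for free. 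Theorem~\ref{thm-1} then supplies, for each nonsingular symmetric choice of the free block, a symmetric $\widetilde K$ with $\widetilde M[\widetilde X_1,X_2]+\widetilde K[\widetilde X_1,X_2]\,\mathrm{diag}(\widetilde\Lambda_1^{-1},\Lambda_3^{-1},0)=0$; taking the free block to be the original $K'_{22}$ and inverting recovers exactly $\widetilde K^{-1}=R_K-\widetilde X_1\widetilde\Lambda_1\widetilde\Gamma_1^{-1}\widetilde X_1^{T}$, i.e.\ the $\widetilde K$ of (\ref{gs-10}), so that (\ref{g-17}) holds. Finally, since $\widetilde X$ is nonsingular and $\mathrm{diag}(\widetilde\Lambda_1,\Lambda_2)$ has the form $\mathrm{diag}(\,\cdot\,,0)$ with nonsingular $n_u\times n_u$ leading part, Lemma~\ref{lem-4} makes $\bigl([\widetilde X_1,X_2],\mathrm{diag}(\widetilde\Lambda_1,\Lambda_2)\bigr)$ a complete Jordan pair of $\widetilde P(\lambda)=\lambda\widetilde M+\widetilde K$; comparing $\mathrm{rank}\,\widetilde M=n_u$ with the number of finite eigenvalues gives $\deg\det\widetilde P=n_u$ (so the $(2,2)$-block of $\widetilde K$ is nonsingular and the updated system is again of the required type, cf.\ Lemma~\ref{lem-1}), whence $\widetilde P$ has no eigenvalues beyond those of $\widetilde\Lambda_1$ and $\Lambda_2$. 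Thus the $n-p$ eigenpairs in $(X_2,\Lambda_2)$ are kept unchanged, $\{\lambda_i\}_{i=1}^p$ are replaced by $\{\widetilde\lambda_i\}_{i=1}^p$, and $\widetilde X_1=X_1\Theta$ spans the same subspace as $X_1$: the EEP-PS is solved.

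The step I expect to be the real obstacle is the block-decoupling of the normalisation matrix --- proving that $\Gamma_{11}$ (and then $\widetilde\Gamma_{11}$) splits into a $p\times p$ block attached to the eigenvectors being updated and an $(n_u-p)\times(n_u-p)$ block attached to the kept finite eigenvectors, and that this latter block $\Gamma'_{22}$ is \emph{literally the same} for the original and the updated system. That identification is what makes $R_M,R_K$ common to both and hence what makes (\ref{gs-13})--(\ref{gs-10}) well posed without $X_2,\Lambda_2$; for the original system it comes from the Sylvester argument resting on (\ref{g-22}), and for the updated system it is read off the explicit rank-$p$ expression for $\widetilde M_u^{-1}$. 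Everything else --- the symmetry checks, the form $\widetilde M=\mathrm{diag}(\widetilde M_u,0)$, and conditions (\ref{g-3-2})--(\ref{g-3-4}) --- is bookkeeping or automatic from the definition of $\widetilde T$.
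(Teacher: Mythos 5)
Your overall skeleton --- apply Theorem \ref{thm-1} in the necessity direction to $\bigl([X_1,X_2],\mbox{diag}(\Lambda_1,\Lambda_2)\bigr)$, record remainders $R_M,R_K$ common to the old and new systems, then apply the sufficiency direction to $\bigl([\widetilde X_1,X_2],\mbox{diag}(\widetilde\Lambda_1,\Lambda_2)\bigr)$ --- is the same as the paper's, but the step you yourself flag as the obstacle is where your argument has a genuine gap, and the Sylvester argument you offer does not close it. In the original coordinates Theorem \ref{thm-1} gives $M_u^{-1}=X_uTX_u^T$ with $T^{-1}=\Gamma+X_\phi^T\Phi X_\phi$ and $X_\phi=[X_{1\phi}\ \ X_{2\phi}]$. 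The disjointness hypothesis (\ref{g-22}) only block-diagonalizes $\Gamma_{11}$; it says nothing about $X_\phi^T\Phi X_\phi$, whose leading $p\times p$ block and cross blocks $X_{1\phi}^T\Phi X_{2\phi}$ are nonzero in general because $X_{1\phi}\neq 0$. Hence $T$ is not block diagonal, its leading block is not $\Gamma_1^{-1}$, and the ``key structural fact'' that $R_M:=M_u^{-1}-X_{1u}\Gamma_1^{-1}X_{1u}^T$ is carried by the kept finite eigenvectors cannot be ``read off (\ref{g-4-1}) blockwise''. (For $R_K$ the blockwise reading is fine, since no $T$ enters (\ref{g-4-2}).) This is exactly why the paper first performs the unit block lower triangular congruence $Q$ of (\ref{gs-q}), chosen so that $Q'X_{1u}=X_{1\phi}$: it leaves $\mbox{diag}(M_u,0)$ invariant and annihilates the $\phi$-block of $X_1$, and only then does $T=\mbox{diag}(\Gamma_1^{-1},T_2)$ decouple, yielding (\ref{g-26-1})--(\ref{g-27-1}) with the same $T_2,\Gamma_2,\Phi,\widehat K_{22}$ serving both systems. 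Without this reduction (or an equivalent device) your decomposition of $M_u^{-1}$, and with it the claim that the trailing block $\Gamma'_{22}$ is ``literally the same'' in the second application, is unproven.

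A secondary problem sits in your second application itself. Defining $\widetilde\Gamma:=\widetilde X_u^T\widetilde M_u\widetilde X_u$ and $\widetilde\Phi:=\Phi$ does give (\ref{g-3-2})--(\ref{g-3-4}) for free, but the sufficiency direction of Theorem \ref{thm-1} also requires the commutation (\ref{g-3-1}) with $\widetilde J_1=\mbox{diag}(\widetilde\Lambda_1^{-1},\Lambda_3^{-1})$; for that you must show $\widetilde X_{1u}^T\widetilde M_u\widetilde X_{1u}=\widetilde\Gamma_1$, that the cross block vanishes, and that the trailing block commutes with $\Lambda_3^{-1}$. Deriving these from the ``rank-$p$ form'' of $\widetilde M_u^{-1}$ presupposes precisely the decoupled expression for $M_u^{-1}$ you have not established, while deriving them by a Sylvester argument would need $\lambda(\widetilde\Lambda_1)\cap\lambda(\Lambda_3)=\emptyset$, which the theorem does not assume. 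The paper sidesteps both issues by \emph{choosing} $\widetilde\Gamma_{11}=\mbox{diag}(\widetilde\Gamma_1,\Gamma_2)$ and $\widetilde\Phi=\Phi$ outright, so (\ref{g-3-1}) holds by construction, and then checking that (\ref{g-32})--(\ref{g-33}) collapse, in the $Q$-coordinates where $\widetilde X_{1\phi}=0$, to the original conditions (\ref{g-25})--(\ref{g-26}); the formulas (\ref{gs-13}) and (\ref{gs-10}) then follow by eliminating the unknown kept data through (\ref{g-26-1}) and (\ref{g-27-1}). Keep your two-application plan, but insert the $Q$-reduction before reading anything blockwise.
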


\begin{proof}
 Since $p\leq n_u$ and $[X_1, X_2]$ is nonsingular, we can see from Lemma \ref{lem-4} that $\mbox{rank}(X_1)=p$. Then, there exists a nonsingular matrix $Q:=\begin{bmatrix}
I_{n_u} & 0\\
Q' & I_{n_{\phi}}\\
\end{bmatrix}\in\mathbb{R}^{n\times n}$  such that
\begin{equation}\label{gs-q}
X_1=Q\begin{bmatrix}
X_{1u}\\
0\\
\end{bmatrix}, \ \ \  Q^T\begin{bmatrix}
M_u & 0\\
0 & 0\\
\end{bmatrix}Q=\begin{bmatrix}
M_u & 0\\
0 & 0\\
\end{bmatrix}.
\end{equation}
Partition $Q^{-1}X_2$ as
$
Q^{-1}X_2=\begin{bmatrix}
X_{2u}\\
X_{2\phi}\\
\end{bmatrix},$ where $ X_{2u}\in\mathbb{R}^{n_u\times (n-p)}
$, and then (\ref{g-16}) can be equivalently rewritten into
\begin{equation}\label{gs-19-1}
\begin{bmatrix}
M_u & 0\\
0 & 0\\
\end{bmatrix}\begin{bmatrix}
X_{1u} & X_{2u}\\
0 & X_{2\phi}\\
\end{bmatrix}+Q^{T}KQ\begin{bmatrix}
X_{1u} & X_{2u}\\
0 & X_{2\phi}\\
\end{bmatrix}\begin{bmatrix}
\Lambda_1^{-1} & 0 & 0\\
0 & \Lambda_3^{-1} & 0\\
0 & 0 & 0\\
\end{bmatrix}=0.
\end{equation}

By (\ref{gs-19-1}) and Theorem \ref{thm-1}, we first give the expressions of  the coefficient matrices $M_u$ and $Q^TKQ$ by the eigenvalue matrix $\Lambda_1$, $\Lambda_3$ and the eigenvectors matrix
$$
\begin{bmatrix}
X_{1u} & X_{2u}\\
0 & X_{2\phi}\\
\end{bmatrix}.$$
 Let $X_u=[X_{1u}\ \ X_{2u}]$, $X_{\phi}=[0_{n_{\phi}\times p}\ \ X_{2\phi}]$. Similar to the proof of Theorem \ref{thm-1}, we define
$$\Gamma=X_u^TM_uX_u, \ \ \ T^{-1}=\Gamma+X_{\phi}^T\Phi X_{\phi},$$
where $\Phi\in\mathbb{R}^{n_{\phi}\times n_{\phi}}$ is a  nonsingular and symmetric matrix.  Then
\begin{equation}\label{gs-gamma-1}
\Gamma=\begin{bmatrix}
X_{1u}^T\\
X_{2u}^T\\
\end{bmatrix}M_u\begin{bmatrix}
X_{1u} & X_{2u}\\
\end{bmatrix},
\end{equation}
and \begin{equation}\label{g-T}
T^{-1}=\Gamma+\begin{bmatrix}
0_{p\times p} & 0\\
0 & X_{2\phi}^T\Phi X_{2\phi}\\
\end{bmatrix}.
\end{equation}
Using (\ref{gs-19-1}), we can see from the proof of Theorem \ref{thm-1} that $\Gamma=\mbox{diag}(\Gamma_{11}, 0)$, where $\Gamma_{11}\in\mathbb{R}^{n_u\times n_u}$, and the matrices $T, \Gamma_{11}$ satisfy
\begin{equation}\label{g-19}
\begin{bmatrix}
\Lambda_1^{-1} & 0 \\
0 & \Lambda_3^{-1}\\
\end{bmatrix}^T\Gamma_{11}=\Gamma_{11}\begin{bmatrix}
\Lambda_1^{-1} & 0 \\
0 & \Lambda_3^{-1}\\
\end{bmatrix}
\end{equation}
\begin{equation}\label{g-20}
X_uTX_{\phi}^T=[X_{1u}\ \ X_{2u}]T\begin{bmatrix}
0\\
X_{2\phi}^T\\
\end{bmatrix}=0,
\end{equation}
\begin{equation}\label{g-21}
X_{\phi}TX_{\phi}^T=[0\ \ X_{2\phi}]T\begin{bmatrix}
0\\
X_{2\phi}^T\\
\end{bmatrix}=\Phi^{-1}.
\end{equation}
From the assumption (\ref{g-22}) and (\ref{g-19}), it follows that $\Gamma_{11}$ must be of the block diagonal form $\Gamma_{11}=\mbox{diag}(\Gamma_1, \Gamma_2)$, where $\Gamma_1\in\mathbb{R}^{p\times p}$, and
$$\Gamma_1\Lambda^{-1}_1=\Lambda_1^{-T}\Gamma_1,\ \  \Gamma_2\Lambda_3^{-1}=\Lambda_3^{-T}\Gamma_2.$$
Substituting $\Gamma=\mbox{diag}(\Gamma_1, \Gamma_2, 0)$ into (\ref{gs-gamma-1}), it follows from the comparison of two sides of the (\ref{gs-gamma-1}) that $\Gamma_1=X_{1u}^TM_uX_{1u}$, i.e., $\Gamma_1$ is of form (\ref{gs-gamma}).

By (\ref{g-T}), we can see that $T$ must be of the block diagonal form $T=\mbox{diag}(T_1, T_2)$, where $T_1\in\mathbb{R}^{p\times p}$, and
$$T_1^{-1}=\Gamma_1, \ \ T_2^{-1}=\mbox{diag}(\Gamma_2, 0_{n_{\phi}\times n_{\phi}})+X_{2\phi}^T\Phi X_{2\phi}.$$
Hence (\ref{g-20}) and (\ref{g-21}) can be equivalently rewritten as
\begin{equation}\label{g-25}
X_{2u}T_2X_{2\phi}^T=0,
\end{equation}
\begin{equation}\label{g-26}
X_{2\phi}T_2X_{2\phi}^T=\Phi^{-1}.
\end{equation}
and the matrices $M_u$ and $K$ of (\ref{gs-19-1}) can be expressed as
\begin{equation}\label{g-26-1}
M_u=(X_{1u}\Gamma_1^{-1}X_{1u}^T+X_{2u}T_2X_{2u}^T)^{-1},
\end{equation}
\begin{equation}\label{g-27}
\begin{array}{rl}
Q^TKQ&=\left(\begin{bmatrix}
X_{1u} & X_{2u}\\
0 & X_{2\phi}\\
\end{bmatrix}\begin{bmatrix}
-\Lambda_1\Gamma_1^{-1} & 0\\
0 & \widehat{K}_{22}\\
\end{bmatrix}\begin{bmatrix}
X_{1u}^T & 0\\
X_{2u}^T & X_{2\phi}\\
\end{bmatrix}\right)^{-1}\\
&=\left(Q^{-1}\begin{bmatrix}
X_{1} & X_{2}\\
\end{bmatrix}\begin{bmatrix}
-\Lambda_1\Gamma_1^{-1} & 0\\
0 & \widehat{K}_{22}\\
\end{bmatrix}\begin{bmatrix}
X_{1}^T \\
X_{2}^T\\
\end{bmatrix}Q^{-T}\right)^{-1},\\
\end{array}
\end{equation}
where $\widehat{K}_{22}=\begin{bmatrix}
-\Lambda_3\Gamma_{2}^{-1} & 0\\
0 & K_{22}'\\
\end{bmatrix}$, $K_{22}'$ is an arbitrary nonsingular $n_{\phi}\times n_{\phi}$ symmetry matrix,
which implies that the coefficient matrices of original undamped    system can be expressed as
\begin{equation}\label{g-26-2}
M=\begin{bmatrix}
(X_{1u}\Gamma_1^{-1}X_{1u}^T+X_{2u}T_2X_{2u}^T)^{-1} & 0\\
0 & 0\\
\end{bmatrix},
\end{equation}
\begin{equation}\label{g-27-1}
K=\left(\begin{bmatrix}
X_{1} & X_{2}\\
\end{bmatrix}\begin{bmatrix}
-\Lambda_1\Gamma_1^{-1} & 0\\
0 & \widehat{K}_{22}\\
\end{bmatrix}\begin{bmatrix}
X_{1}^T \\
X_{2}^T\\
\end{bmatrix}\right)^{-1}.
\end{equation}

Let $\widetilde{X}_1\in\mathbb{R}^{n\times p}$ and $[\widetilde{X}_1\ \ X_2]$ is nonsingular. Since $Q$ is nonsingular and satisfies (\ref{gs-q}),
it is easy to verify that (\ref{g-17}) is equivalent to
\begin{equation}\label{g-17-1}
\begin{bmatrix}
\widetilde{M}_u & 0\\
0 & 0\\
\end{bmatrix}[Q^{-1}\widetilde{X}_1\ \ Q^{-1}X_2]+Q^T\widetilde{K}Q[Q^{-1}\widetilde{X}_1\ \ Q^{-1}X_2]\begin{bmatrix}
\widetilde{\Lambda}_1^{-1} & 0 & 0\\
0 & \Lambda_3^{-1} & 0\\
0 & 0 & 0\\
\end{bmatrix}=0,
\end{equation}
Next, for given matrices $\widetilde{X}_1$ and $\widetilde{\Lambda}_1$, we will find $\widetilde{M}_u$ and $\widetilde{K}$ such that (\ref{g-17-1}) is satisfied.
Let $Q^{-1}\widetilde{X}_1=\begin{bmatrix}
\widetilde{X}_{1u}\\
\widetilde{X}_{1\phi}\\
\end{bmatrix}$ with $\widetilde{X}_{1u}\in\mathbb{R}^{n_u\times p}$. Recall that $Q^{-1}X_2=\begin{bmatrix}
 X_{2u}\\
 X_{2\phi}\\
 \end{bmatrix}$, then $\begin{bmatrix}
 \widetilde{X}_u\\
 \widetilde{X}_{\phi}\\
 \end{bmatrix}:=[Q^{-1}\widetilde{X}_1\ \ Q^{-1}X_2]$ is nonsingular, where
 $$\widetilde{X}_u=[\widetilde{X}_{1u}\ \ X_{2u}],\ \ \widetilde{X}_{\phi}=[\widetilde{X}_{1\phi}\ \ X_{2\phi}].$$
 From Theorem \ref{thm-1} we know that if there exist nonsingular and symmetric matrices $\widetilde{\Gamma}_{11}=\mbox{diag}(\widetilde{\Gamma}_1, \widetilde{\Gamma}_2)$, and $\widetilde{\Phi}\in\mathbb{R}^{n_{\phi}\times n_{\phi}}$, where $\widetilde{\Gamma}_1\in\mathbb{R}^{p\times p}$ and $\widetilde{\Gamma}_2\in\mathbb{R}^{(n_u-p)\times (n_u-p)}$ satisfying
\begin{equation}\label{gs-14}
\widetilde{\Gamma}_1\widetilde{\Lambda}^{-1}_1=\widetilde{\Lambda}_1^{-T}\widetilde{\Gamma}_1,\ \  \widetilde{\Gamma_2}\Lambda_3^{-1}=\Lambda_3^{-T}\widetilde{\Gamma_2},
\end{equation}
such that
\begin{equation}\label{g-32}
\widetilde{X}_u\widetilde{T}\widetilde{X_{\phi}}^T=0,
\end{equation}
\begin{equation}\label{g-33}
\widetilde{X}_{\phi}\widetilde{T}\widetilde{X}_{\phi}^T=\widetilde{\Phi}^{-1},
\end{equation}
where
\begin{equation}\label{gs-T}
\widetilde{T}^{-1}=\begin{bmatrix}
\widetilde{\Gamma}_{11} & 0\\
0 & 0_{n_{\phi}\times n_{\phi}}\\
\end{bmatrix}+\widetilde{X}_{\phi}^T\widetilde{\Phi}\widetilde{X}_{\phi},
\end{equation}
then the matrices $\widetilde{M}_u$ and $\widetilde{K}$ defined by
\begin{equation}\label{g-30}
\widetilde{M}_u=(\widetilde{X}_{u}\widetilde{T}\widetilde{X}_{u}^T)^{-1},
\end{equation}
\begin{equation}\label{g-31}
Q^T\widetilde{K}Q=\left(\begin{bmatrix}
\widetilde{X}_u\\
\widetilde{X}_{\phi}\\
\end{bmatrix}\begin{bmatrix}
-\widetilde{\Lambda}_1\widetilde{\Gamma}_1^{-1} & 0\\
0 & \widetilde{K}_{22}\\
\end{bmatrix}\begin{bmatrix}
\widetilde{X}_{u}^T & \widetilde{X}_{\phi}^T\\
\end{bmatrix}\right)^{-1},
\end{equation}
where $\widetilde{K}_{22}=\begin{bmatrix}
-\Lambda_3\widetilde{\Gamma}_{2}^{-1} & 0\\
0 & K_{22}'\\
\end{bmatrix}\in\mathbb{R}^{(n-p)\times (n-p)}$, satisfy (\ref{g-2}), i.e., they form a solution to the EEP-PS. Similar as (\ref{g-15}) in the proof of Theorem \ref{thm-1}, we can see that if (\ref{g-32}) and (\ref{g-33}) are satisfied, the matrix $\begin{bmatrix}
\widetilde{X}_u\\
\widetilde{X}_{\phi}\\
\end{bmatrix}$ being nonsingular ensures that $\widetilde{M}_u$ and $\widetilde{K}$ in (\ref{g-30}) and (\ref{g-31}) are well defined, and vice versa.

Since $\widetilde{\Lambda}_1$ is of the form (\ref{gs-7}), any nonsingular matrix $\widetilde{\Gamma}_1$ of the form (\ref{gs-11}) is symmetric and satisfies $\widetilde{\Gamma}_1\widetilde{\Lambda}^{-1}_1=\widetilde{\Lambda}_1^{-T}\widetilde{\Gamma}_1$ in (\ref{gs-14}). Noting
\begin{equation}\label{gs-8}
\widetilde{X}_1=X_1\Theta,
\end{equation}
we have
$$\begin{bmatrix}
\widetilde{X}_{1u}\\
\widetilde{X}_{1\phi}\\
\end{bmatrix}=Q^{-1}\widetilde{X}_1=Q^{-1}X_1\Theta=\begin{bmatrix}
X_{1u}\\
0\\
\end{bmatrix}\Theta=
\begin{bmatrix}
X_{1u}\Theta\\
0\\
\end{bmatrix},$$
which implies that $\widetilde{X}_{1\phi}=0$, $\widetilde{X}_{1u}=X_{1u}\Theta$ and $\widetilde{X}_{\phi}=X_{\phi}$. And then $\widetilde{T}$ given by (\ref{gs-T}) must be of the block form
$\widetilde{T}=\mbox{diag}(\widetilde{T}_1, \widetilde{T}_2),$
where
$$\widetilde{T}^{-1}_1=\widetilde{\Gamma}_1\in\mathbb{R}^{p\times p}, \ \ \ \widetilde{T}^{-1}_2=\begin{bmatrix}
\widetilde{\Gamma}_2 & 0\\
 0 & 0_{n_{\phi}\times n_{\phi}}\\
 \end{bmatrix}+X_{2\phi}^T\widetilde{\Phi}X_{2\phi}\in\mathbb{R}^{(n-p)\times (n-p)}.$$
Substituting $\widetilde{T}$ into (\ref{g-32}) and (\ref{g-33}), we have
\begin{equation}\label{g-32-1}
\widetilde{X}_u\widetilde{T}\widetilde{X_{\phi}}^T=[\widetilde{X}_{1u}\ \ X_{2u}]\widetilde{T}\begin{bmatrix}
\widetilde{X}_{1\phi}^T\\
X_{2\phi}^T\\
\end{bmatrix}=X_{2u}\widetilde{T}_2X_{2\phi}^T=0,
\end{equation}
\begin{equation}\label{g-33-1}
\widetilde{X}_{\phi}\widetilde{T}\widetilde{X_{\phi}}^T=[\widetilde{X}_{1\phi}\ \ \widetilde{X}_{2\phi}]\widetilde{T}\begin{bmatrix}
\widetilde{X}_{1\phi}^T\\
X_{2\phi}^T\\
\end{bmatrix}=X_{2\phi}\widetilde{T}_2X_{2\phi}^T=\widetilde{\Phi}^{-1}.
\end{equation}
If we choose $\widetilde{\Phi}=\Phi$ and $\widetilde{\Gamma_2}=\Gamma_2$, then $\widetilde{T}_2=T_2$, $\widetilde{K}_{22}=\widehat{K}_{22}$.
It follows from (\ref{g-25}), (\ref{g-26}), (\ref{g-32-1}) and (\ref{g-33-1} that
$$\widetilde{X}_u\widetilde{T}\widetilde{X_{\phi}}^T=X_{2u}\widetilde{T}_2X_{2\phi}^T=X_{2u}T_2X_{2\phi}^T=0,$$
$$\widetilde{X}_{\phi}\widetilde{T}\widetilde{X_{\phi}}^T=X_{2\phi}\widetilde{T}_2X_{2\phi}^T=X_{2\phi}T_2X_{2\phi}^T=\Phi^{-1},$$
which imply that  (\ref{g-32}) and (\ref{g-33}) are satisfied. Hence, the matrices $\widetilde{M}_u$ and $\widetilde{K}$ defined by (\ref{g-30}) and (\ref{g-31}), respectively, form a solution to the EEP-PS, provided that $\widetilde{M}_u$ and $\widetilde{K}$ in (\ref{g-30}) and (\ref{g-31}) are well defined.

Substituting (\ref{gs-8}) into (\ref{g-30}) and (\ref{g-31}), it follows from (\ref{g-26-1}) and (\ref{g-27-1}) that
$$\widetilde{M}_u=\left(M_u^{-1}-X_{1u}\Gamma^{-1}_1X_{1u}^T+X_{1u}\Theta\widetilde{\Gamma}^{-1}_1\Theta^TX_{1u}^T\right)^{-1},$$
$$\begin{array}{rl}
(Q^T\widetilde{K}Q)^{-1}=&\begin{bmatrix}
Q^{-1}\widetilde{X}_1 & Q^{-1}X_2\\
\end{bmatrix}\begin{bmatrix}
-\widetilde{\Lambda}_1\widetilde{\Gamma}_1^{-1} & 0\\
0 & \widehat{K}_{22}\\
\end{bmatrix}\begin{bmatrix}
\widetilde{X}_1^TQ^{-T}\\
X_2^TQ^{-T}\\
\end{bmatrix}\\
=&Q^{-1}\begin{bmatrix}
X_1 & X_2\\
\end{bmatrix}\begin{bmatrix}
-\Theta\widetilde{\Lambda}_1\widetilde{\Gamma}_1^{-1}\Theta^T & 0\\
0 & \widehat{K}_{22}\\
\end{bmatrix}\begin{bmatrix}
X_1^T\\
X_2^T\\
\end{bmatrix}Q^{-T}\\
=& Q^{-1}\left(-X_1\Theta\widetilde{\Lambda}_1\widetilde{\Gamma}_1^{-1}\Theta^TX_1^T+X_2\widehat{K}_{22}X_2^T\right)Q^{-T}\\
=& Q^{-1}\left(K^{-1}-X_1\Theta\widetilde{\Lambda}_1\widetilde{\Gamma}_1^{-1}\Theta^TX_1^T+X_1\Lambda_1\Gamma_1^{-1}X_1^T\right)Q^{-T}.\\
\end{array}$$
It follows that
$$\widetilde{K}=\left(K^{-1}-X_1\Theta\widetilde{\Lambda}_1\widetilde{\Gamma}_1^{-1}\Theta^TX_1^T+X_1\Lambda_1\Gamma_1^{-1}X_1^T\right)^{-1},$$
which are exactly the formulas (\ref{gs-13}) and  (\ref{gs-10}).
\end{proof}

In the formula (\ref{gs-10}) for $\widetilde{K}$, inverses of $n\times n$ matrices is involved. If $p$ is less than $n_u$ and $n_{\phi}$, this formula can be reformulated as, by using the Sherman-Morrison-Woodbury formula \cite{Golub-book-1996}
\begin{equation}\label{ggg-1}
\widetilde{M}_u=M_u-M_u(X_{1u}\Theta\widetilde{\Gamma}^{-1}_1\Theta^T-X_{1u}\Gamma^{-1}_1)\left(I+X_{1u}^TM_u(X_{1u}\Theta\widetilde{\Gamma}^{-1}_1\Theta^T-X_{1u}\Gamma^{-1}_1)\right)^{-1}X_{1u}^TM_u,
\end{equation}
\begin{equation}\label{gs-13-1}
\widetilde{K}=K-K(X_1\Lambda_1\Gamma_1^{-1}-X_1\Theta\widetilde{\Lambda}_1\widetilde{\Gamma}_1^{-1}\Theta^T)\left(I+X_1^TK(X_1\Lambda_1\Gamma_1^{-1}-X_1\Theta\widetilde{\Lambda}_1\widetilde{\Gamma}_1^{-1}\Theta^T)\right)^{-1}X_1^TK,
\end{equation}
where only inverses of $p\times p$ matrices are needed.

Under the assumption that $\{\lambda_j\}_{j=1}^p$ are nonzero simple eigenvalues, the corresponding $\Gamma_1$ must be of the form similar to (\ref{gs-11}) with $\tilde{s}$ replaced by $s$, since $\Gamma_1$ must be symmetric and satisfy (\ref{g-3-1}). Similarly, $\widetilde{\Gamma}_1$ of the form (\ref{gs-11}) also follows from the assumption that $\{\tilde{\lambda}\}_{j=1}^p$ are nonzero simple eigenvalues. We should point out that the numbers of complex conjugate eigenvalues and real eigenvalues of the updated system may not be same as those of the original system, since the conditions that  the matrix $\widetilde{\Gamma}_1$ should satisfy is $\widetilde{\Gamma}^T=\widetilde{\Gamma}$  and (\ref{g-3-1}).

From Theorem \ref{thm-2}, we known that if all the eigenvalues of $\widetilde{\Lambda}_1$ are simple and nonzero, and $[\widetilde{X}_1\ \ X_2]$ is nonsingular, then EEP-PS is  solvable, and for any nonsingular $\Theta$ and $\widetilde{\Gamma}_1$ we can give a  parametric solution to the EEP-PS. A trivial way to choose $\Theta$ and $\widetilde{\Gamma}_1$ is $\Theta=I$ and $\widetilde{\Gamma}_1=\Gamma_1$. In this case $\widetilde{X}_1=X_1\Theta=X_1$, i.e., the eigenvectors in $X_1$ are also kept unchanged in the updating. And $\widetilde{\Gamma}_1=\Gamma_1$ generally restricts $\tilde{s}=s$, i.e., the numbers of complex conjugate eigenvalues and real eigenvalues of the updated system must be same as those of the original system.

In all, the following Algorithm 1 can be used to find a solution to the EEP-PS.

\begin{algorithm}[h]\label{alg-2}
\caption{Find solution to EEP-PS.} 
\hspace*{0.02in} {\bf Input:} 
$X_1$, $\Lambda_1$, $M_u$, $K$.\\
\hspace*{0.02in} {\bf Output:} 
$\widetilde{M}_u$ and $\widetilde{K}$.
\begin{algorithmic}[1]
\State Partition $X_1$ as in (\ref{gs-x}). 
\State Choose $\Theta$ and  $\widetilde{\Gamma}_1$ in (\ref{gs-11}).
\State Compute $\Gamma_1$ as in (\ref{gs-gamma}).
\State Compute $\widetilde{M}_u$ and $\widetilde{K}$ as in (\ref{gs-13}) and (\ref{gs-10}), respectively.
\end{algorithmic}
\end{algorithm}

It is worthwhile to point out that Algorithm 1 does not need any information of $\Lambda_2$ and $X_2$, which are  the remaining $n-p$ eigenvalues and eigenvectors to be kept unchanged. We can also see from Algorithm 1 that there are many freedoms in choosing $\Theta$ and $\widetilde{\Gamma}_1$, which can be further exploited to achieve some other desirable properties. For example, we can wish the updated matrices are approximate to the coefficient matrices, which is to minimize
\begin{equation}\label{gg-r}
Rec.MK=\tau_1\frac{||M_u-\widetilde{M}_u||_2}{||M_u||_2}+\tau_2\frac{||K-\widetilde{K}||_2}{||K||_2},
\end{equation}
where $\tau_1, \tau_2$ are weight factors to balance all terms and $\tau_1>0, \tau_2>0$. This is a constrained optimization problem, since $\Theta$ is requires to be nonsingular. If we choose certain nonsingular matrix $\Theta$, and leave $\widetilde{\Gamma}_1$ be the free parameter matrix, then it will become a simple unconstrained optimization problem, which can be solved by the MATLAB function {\bf fminunc}.

In Step 4. $\widetilde{M}_u$ can be computed by either (\ref{gs-13}) or (\ref{ggg-1}), and $\widetilde{K}$ can be computed by either (\ref{gs-10}) or (\ref{gs-13-1}).
Note that (\ref{gs-13}) and (\ref{gs-10}) require inverses of matrices of order $n_u$ and $n$, respectively, while (\ref{ggg-1}) and (\ref{gs-13-1}) require inverses of matrices of order $p$. However, if $n$ and $n_u$ is much greater than $p$, (\ref{ggg-1}) and (\ref{gs-13-1}) not only costs less than (\ref{gs-13}) and (\ref{gs-10}), respectively, but also generally leads to more accurate solutions. And we will provide some numerical examples to illustrate it in the next section.

\section{Numerical examples}

In order to illustrate the performance of Algorithm 1, we present some numerical examples. All computations were carried out in MATLAB 2017a with machine epsilon $\epsilon\approx 2.2\times 10^{-16}$.  In these examples we compute the relative residuals of the updated system (Res.U) and the original system (Res.O) as
$$\mbox{Res1.U}=\frac{||\widetilde{M}\widetilde{X}_1\widetilde{\Lambda}_1+\widetilde{K}\widetilde{X}_1||_2}{(||\widetilde{M}||_2||\widetilde{\Lambda}_1||_2+||\widetilde{K}||_2)||\widetilde{X}_1||_2},\ \ \ \mbox{Res2.U}=\frac{||\widetilde{M}X_2+\widetilde{K}X_2\Lambda_2'||_2}{(||\widetilde{M}||_2||+||\widetilde{K}||_2||\Lambda_2'||_2)||\widetilde{X}_1||_2},$$
$$\mbox{Res1.O}=\frac{||MX_1\Lambda_1+KX_1||_2}{(||M||_2||\Lambda_1||_2+||K||_2)||X_1||_2},\ \ \ \mbox{Res2.O}=\frac{||MX_2+KX_2\Lambda_2'||_2}{(||M||_2||+||K||_2||\Lambda_2'||_2)||X_2||_2},$$
where $\Lambda_2'=\mbox{diag}(\Lambda_3^{-1}, 0)\in\mathbb{R}^{(n-p)\times (n-p)},$ and $\Lambda_3$ is given by  (\ref{gs-lambda3}) which is the real representation of eigenvalues to be kept unchanged. For an randomly generated nonsingular matrix $\Theta$, results obtained by Algorithm 1 with two choices of $\widetilde{\Gamma}_1$ in Step 2:

{\rm (a)} choose $\widetilde{\Gamma}_1=\Gamma_1$;

{\rm (b)} take $\widetilde{\Gamma}_1$ as a free parameter matrix;
\noindent are respectively denoted by `$*_{-}$a', `$*_{-}$b'.

\begin{exa}\label{exa-1}
In this example, the matrices $M, K$ of the form (\ref{gs-2}) in the original undamped piezoelectric smart system are randomly generated with $n_u=100$ and $n_{\phi}=40$. Suppose that we are to update the following six eigenvalues
$$\{\lambda_j\}_{j=1}^6=\{0.1117\pm 0.8733i, 0.2685\pm 0.2672i, 0.6055, 0.3568\}$$
to another six  eigenvalues $\{\tilde{\lambda}_i\}_{i=1}^6$, including three real eigenvalues and one pair of complex conjugate eigenvalues, with random perturbations $\Delta \lambda_j=\lambda_j-\tilde{\lambda}_j$, where $|\Delta \lambda_j|\leq 0.3$, and keep the remaining eigenvalues and corresponding eigenvectors unchanged. In this example, those eigenvalues $\tilde{\lambda}$ are
$$\{\tilde{\lambda}_j\}_{j=1}^6=\{0.0645\pm 0.6315i, 0.4044\pm 0.4044i, 0.5463, 0.0041\}.$$
\end{exa}

In this example, we updated the system by two choices of $\widetilde{\Gamma}_1$, and the relative residues are
$$\mbox{Res1.U.a}=1.2434e-14,\ \ \ \ \mbox{Res2.U.a}=3.2196e-16,$$
$$\mbox{Res1.U.b}=1.4777e-14,\ \ \ \ \mbox{Res2.U.b}=1.6653e-16,$$
which are comparable with those of the original system
$$\mbox{Res1.O}=1.3402e-16,\ \ \ \ \mbox{Res2.O}=1.1102e-16.$$

For choice {\rm (b)}, we choose certain nonsingular matrix $\Theta$ and take $\widetilde{\Gamma}_1$ with form (\ref{gs-11}) as the free parameter matrix. Then we use the MATLAB function {\bf fminunc} to compute a solution to the EEP-PS such that the distance between the original system and updated systems, the {\rm Rec.MK} defined as in (\ref{gg-r}) with $\tau_1=\tau_2=1$, is minimized. The minimum {\rm Rec.MK} obtained by {\bf fminunc} is $0.0.2382$, which is smaller than $\mbox{Rec.MK}=8.6394$ with choice {\rm (a)}, which implies that exploiting the freedoms of parametric matrix $\widetilde{\Gamma}_1$ does lead to smaller updates on the coefficient matrices.

\begin{exa}\label{exa-2}
Let $M, K$ are the same as in Example \ref{exa-1}. In this example, we are to update the six eigenvalues $\lambda_1, \ldots, \lambda_6$ in Example \ref{exa-1} to one pair of complex conjugate eigenvalues and four real ones, with similar random perturbations. The new eigenvalues $\tilde{\lambda}_j$ are
$$\{\tilde{\lambda}_j\}_{j=1}^6=\{0.8480\pm 0.5641i, 0.1445, 0.6530,0.5392, 0.7539\}.$$
\end{exa}

Since the numbers of complex eigenvalues of $\{\lambda_j\}_{j=1}^6$ and $\{\tilde{\lambda}_j\}_{j=1}^6$ are different, we cannot set  $\widetilde{\Gamma}_1=\Gamma_1$. With a  nonsingular matrix $\Theta$, the MATLAB function {\bf fminunc} with $\widetilde{\Gamma}_1$ of form (\ref{gs-11}) as the free parameter matrix is applied to minimize {\rm Rec.OU} in (\ref{gg-r}) with $\tau_1=\tau_2=1$. The updated system satisfies
$$\mbox{Res1.U.b}=3.4990e-14,\ \ \ \ \mbox{Res2.U.b}=4.6628e-16,\ \ \ \ \mbox{Rec.MK}=0.2138.$$

\section*{Acknowledgements}

This work is supported by Research Foundation of Changsha University of Science and Technology under Grant Number 2019QJCZ051 and Department of Education of Hunan Province under Grant number 19B028.  The research is financially supported by Hunan Provincial Key Laboratory of Mathematical Modelling and Analysis in Engineering (Changsha University of Science and Technology, R.P. China).

\section*{References}

\bibliography{mybibfile00}

\end{document}